\numberwithin{equation}{section}
\theoremstyle{plain}
  \newtheorem{thm}{Theorem}[section]
  \newtheorem{lem}[thm]{Lemma}
  \newtheorem{prop}[thm]{Proposition}
  \newtheorem{cor}[thm]{Corollary}
  \newtheorem*{thm*}{Theorem}  
  \newtheorem*{thmm*}{Main Theorem} 
\theoremstyle{definition}
  \newtheorem{defn}[thm]{Definition}
    \newtheorem{assump}[thm]{Assumption}
  \newtheorem{rmk}[thm]{Remark}
  \newtheorem{obs}[thm]{Observation}
  \newtheorem*{ack*}{Acknowledgements}
  \newtheorem*{ref*}{Reference}
  \newtheorem{ex}[thm]{Example}
  \newtheorem*{ft*}{Fact}
\theoremstyle{plain}
\newcommand\pl{\partial}
\newcommand\iip[2]{\left\langle{#1},{#2}\right\rangle}
\newcommand\af{\alpha}
\newcommand\om{\omega}
\newcommand\gm{\gamma}
\newcommand\Om{\Omega}
\newcommand\ta{\theta}
\newcommand\ld{\lambda}
\newcommand\sm{\sigma}
\newcommand\Gm{\Gamma}
\newcommand\Sm{\Sigma}
\newcommand\Dt{\Delta}
\newcommand\vph{\varphi}
\newcommand\w{\wedge}
\newcommand\BR{\mathbb{R}}
\newcommand\BC{\mathbb{C}}
\newcommand\BZ{\mathbb{Z}}
\newcommand\BN{\mathbb{N}}
\newcommand\BS{\mathbb{S}}
\newcommand\BM{\mathbb{M}}
\newcommand\bx{\mathbf{x}}
\newcommand\bz{\mathbf{z}}
\newcommand\bp{\mathbf{p}}
\newcommand\CA{\mathcal{A}}
\newcommand\CC{\mathcal{C}}
\newcommand\CI{\mathcal{I}}
\newcommand\CD{\mathcal{D}}
\newcommand\CH{\mathcal{H}}
\newcommand\CL{\mathcal{L}}
\newcommand\CE{\mathcal{E}}
\newcommand\CG{\mathcal{G}}
\newcommand\SL{\mathscr{L}}
\newcommand\SH{\mathscr{H}}
\DeclareMathOperator{\re}{Re}
\DeclareMathOperator{\im}{Im}
\DeclareMathOperator{\Int}{int}
\DeclareMathOperator{\spt}{spt}
\DeclareMathOperator{\reg}{Reg}
\DeclareMathOperator{\sing}{Sing}
\title{Uniqueness in the Plateau problem for \\ calibrated currents}
\author{Bryan Dimler and Chen-Kuan Lee}
\address{Department of Mathematics \\ University of California, Irvine \\ Irvine, CA 92697 \\ USA}
\email{bdimler@uci.edu}
\address{Department of Mathematics\\University of Notre Dame\\Notre Dame, IN 46556\\USA}
\email{clee36@nd.edu}
\begin{document}
\begin{sloppypar}

\begin{abstract}
We show that every compactly supported smoothly calibrated integral current with connected $C^{3,\alpha}$ boundary is the unique solution to the oriented Plateau problem for its boundary data. The same holds true for compactly supported ``continuously calibrated" integral flat chains. This is proved as a consequence of the boundary regularity theory for area-minimizing currents and a unique continuation argument in the spirit of Frank Morgan. In codimension one, the argument yields a sufficient condition for uniqueness in the oriented Plateau problem expressed in terms of the regularity of the calibrating form.
\end{abstract}

\maketitle
%%%%%%%%%%%%%%%%%%%%%%%%%%%%%%%%
\section{\textbf{Introduction}}

The Plateau problem asks whether a given boundary bounds a minimal surface with least area. In the early 1930's, Douglas \cite{Do31} and Rad\'o \cite{Rad30} gave the positive answer for Jordan curves in $\BR^3$, and their work was generalized to Riemannian manifolds in 1948 by Morrey \cite{Mor48}. Later, Federer and Fleming \cite{FF60} introduced the notion of \emph{area-minimizing integral currents}\footnote{See Section 2 for the formal definitions.}, a well-known generalization of minimal surfaces, to formulate and address the \emph{oriented} Plateau problem. Due to its broad applicability across all dimensions and codimensions, Federer and Fleming's theory marks a major success in establishing existence results.

A natural question is whether solutions to the Plateau problem are unique. In general, this is hard since there are counterexamples, even in simple cases. For example, if one considers the Plateau problem in $\BR^2$ with boundary given by the four corners of a square, two distinct solutions are given by pairs of parallel line segments that coincide with the square's sides. One can even produce smooth connected boundaries admitting at least three solutions---or even a continuum of solutions (see e.g. \cite{Nit68, Mor76}). Fortunately, uniqueness is known to be a generic property for both the oriented and unoriented Plateau problems, where the latter case is interpreted in the sense of flat chains modulo 2 (see \cite{Mor78, Mor82, CMMS}). 

The first generic uniqueness theorems were due to Morgan in \cite{Mor78}, and were proved for surfaces in $\BR^3$. The proof relies fundamentally on a unique continuation argument at the boundary (see \Cref{obs1}). Shortly after, he extended his results to elliptic integrands in arbitrary dimensions, as well as to all codimensions in the special case of area-minimizing flat chains modulo 2 (see \cite{Mor81, Mor82}). Due to their reliance on Allard's boundary regularity \cite{All75}, Morgan's original results were confined to the Euclidean setting, required uniform convexity assumptions on the boundary, and, for area-minimizing integral $k$-currents, were restricted to codimension one. Despite this, he knew that his results could be extended in both cases provided there was a suitable boundary regularity theory available (see e.g. \cite[Remark after Theorem 7.1]{Mor82}). Recently, Caldini, Marchese, Merlo, and Steinbr\"uchel proved Morgan's observation \cite[Theorem 1.3]{CMMS} as a consequence of the boundary regularity theory of De Lellis, De Philippis, Hirsch and Massaccesi in \cite{CDHM}. Since their proof relies on a more general boundary regularity framework, it does not require uniform convexity assumptions on the boundary like Morgan's theorems do.

Given a specific boundary, it remains unclear whether it uniquely bounds an area-minimizing integral current or a flat chain modulo 2. Uniqueness in the Plateau problem can be proved only in restricted cases, such as for surfaces in $\BR^3$ bounded by special Jordan curves \cite{Rad30, Nit73} and for hypersurfaces in $\BR^n$ satisfying suitable conditions \cite{Lin87, NS25, Ha77}. In this paper, we prove uniqueness in the oriented Plateau problem under a natural geometric condition---without imposing any restrictions on dimension or codimension---using Morgan's unique continuation arguments and the boundary regularity theory in \cite{CDHM}. Specifically, we show that \emph{every} compactly supported, \emph{smoothly calibrated} integral current with connected $C^{3,\alpha}$ boundary uniquely solves the oriented Plateau problem for its boundary data. Our result also recovers several known cases. The following assumptions are crucial.
\begin{assump}\label{A1}
    Fix $n \geq 3$ and $1 \leq k \leq n-1$. Let $W \subset \subset U \subset \BR^n$ be open connected sets. We will always assume that $\Gm \subset W$ is a $(k-1)$-dimensional closed oriented connected embedded $C^{3,\alpha}$ submanifold of $\BR^n$, that $T \in \CI_{k, c}(\BR^n)$ (i.e. integral $k$-current in $\BR^n$ with compact support) is area-minimizing in $U$ with $\spt T \subset W$, and that $\partial T = [[\Gamma]]$.
\end{assump}

We can now state our main theorem, which is a special case of \Cref{main}.

\begin{thm}[Uniqueness in the Oriented Plateau Problem] \label{UOPP}
    Let $U = \BR^n$ and suppose that $T \in \CI_{k,c}(\BR^n)$ and $\Gm$ are as in \Cref{A1}. Let $\vph \in \CE^k(\BR^n)$ (i.e. smooth $k$-form in $\BR^n$) be a calibration. Assume that $T$ is calibrated by $\vph$ in $\BR^n$. If $T^\prime \in \CI_{k, c}(\BR^n)$ is \emph{globally} area-minimizing with $\pl T^\prime = [[\Gm]]$, then $T^\prime = T$ in $\BR^n$.
\end{thm}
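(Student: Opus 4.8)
The plan is to combine the fundamental lemma of calibrated geometry, the boundary regularity of \cite{CDHM}, and a two‑stage unique continuation argument (first along $\Gm$, then through the interior).

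\textbf{Step 1 (the calibration makes the problem symmetric).} Since $\vph$ is a calibration ($d\vph=0$, comass at most $1$) and $T$ is calibrated by $\vph$, any $T'\in\CI_{k,c}(\BR^n)$ with $\pl T'=[[\Gm]]$ gives a compactly supported integral $k$-cycle $T'-T=\pl R$, whence $(T'-T)(\vph)=R(d\vph)=0$ and so $\bM(T)=T(\vph)=T'(\vph)\le\bM(T')$. Thus $T$ is globally area-minimizing, and if $T'$ is too, then $\bM(T)=\bM(T')$ and $T'(\vph)=\bM(T')$; as $\vph$ has comass at most $1$ this forces $\langle\vph,\vec{T'}\,\rangle=1$ $\|T'\|$-a.e., i.e.\ $T'$ is also calibrated by $\vph$. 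Hence $T$ and $T'$ are interchangeable: both are compactly supported, globally area-minimizing, calibrated by $\vph$, with boundary $[[\Gm]]$. Moreover $\spt T$ is connected: if $\spt T=A\sqcup B$ with $A,B$ disjoint closed and $\Gm\subset A$, then $T\mres B$ is a compactly supported area-minimizing integral cycle, hence $0$ (the zero current competes), so $B=\emptyset$; likewise $\spt T'$ is connected.

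\textbf{Step 2 (a common boundary regular point, multiplicity one).} By the boundary regularity theorem of \cite{CDHM}, applied with the $C^{3,\af}$ hypothesis on $\Gm$, the boundary regular points of $T$ and of $T'$ form open dense subsets of $\Gm$; since $\Gm$ is connected their intersection is open, dense, and nonempty, so fix a common boundary regular point $p$. On some $B_\rho(p)$ the supports of $T$ and of $T'$ are finite unions of smooth embedded minimal submanifolds, each having $\Gm\cap B_\rho(p)$ in its boundary or in its interior and each a $\vph$-calibrated submanifold by Step 1. Comparing $T$ on $B_\rho(p)$ with the competitor obtained by replacing $T\mres B_\rho(p)$ with a single copy of its outermost sheet — a cut-and-paste legitimate for a.e.\ $\rho$ — strictly lowers mass unless $T$ equals a single sheet of multiplicity one there; the same holds for $T'$. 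Write $T=[[M]]$ and $T'=[[M']]$ on $B_\rho(p)$, with $M,M'$ smooth calibrated minimal sheets and $\pl M=\pl M'=\Gm\cap B_\rho(p)$.

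\textbf{Step 3 (comass rigidity and boundary unique continuation).} Fix $q\in\Gm\cap B_\rho(p)$, orient $T_q\Gm$ by the unit simple $(k-1)$-vector $\omega$, and let $\nu$ be the inward unit conormal of $M$ at $q$, so $\vec M(q)=\omega\w\nu$ and $\langle\vph(q),\omega\w\nu\rangle=1$. With $\bt=\iota_\omega\vph(q)$ and $\bt^\perp$ the component of its metric dual perpendicular to $T_q\Gm$, the comass bound gives $\langle\vph(q),\omega\w v\rangle\le|\bt^\perp|\,|v|$ for all $v$, with $|\bt^\perp|\le1$; hence $\langle\bt^\perp,\nu\rangle=1$ forces $|\bt^\perp|=1$ and $\nu=\bt^\perp$. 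Thus the tangent plane at $q$ of \emph{every} calibrated sheet of $T$ or $T'$ near $p$ is the single plane $T_q\Gm\oplus\BR\bt^\perp$. In particular $M$ and $M'$ are smooth minimal submanifolds carrying the same Cauchy data along the hypersurface $\Gm\cap B_\rho(p)$, so by unique continuation from the boundary for the minimal surface equation (cf.\ \Cref{obs1}) they coincide near $p$, collapsing all sheets into one; consequently $T=T'$ on a ball $B_{\rho'}(p)$.

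\textbf{Step 4 (propagation, and the main obstacle).} Let $G=\{x\in\reg T:\ x\in\reg T'\text{ and }T=T'\text{ near }x\}$; by Step 3, $G\ne\emptyset$, and $G$ is open. It is closed in $\reg T\setminus\sing T'$: if $x_i\to x$ with $x_i\in G$ and $x\notin\sing T'$, then near $x$ both $\spt T$ and $\spt T'$ are smooth minimal submanifolds through $x$ with a common tangent plane (the limit of the tangent planes at the $x_i$) which coincide on the open set $\bigcup_i\{T=T'\}$ accumulating at $x$; interior unique continuation for minimal submanifolds together with the constancy theorem give $T=T'$ near $x$. Since $\spt T$ is connected and $\sing T,\sing T'$ have dimension at most $k-2$, the set $\reg T\setminus\sing T'$ is connected, so $G=\reg T\setminus\sing T'$, hence $\spt T\subset\spt T'$ and, by symmetry, $\spt T=\spt T'=:\Sm$. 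On the connected manifold $\reg\Sm$ the currents $T$ and $T'$ have locally constant integer densities agreeing on an open set, hence everywhere; as the singular set is null for both masses, $T=T'$ on $\BR^n$. I expect the crux to be Step 3: extracting from the boundary regularity theory a local picture clean enough to run the comass computation and then invoking Cauchy-data unique continuation for the (quasilinear, systemic in higher codimension) minimal surface equation along $\Gm$ — with the multiplicity-one reduction of Step 2 and the propagation past interior singularities in Step 4 being the remaining technical points.
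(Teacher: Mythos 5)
Your overall route mirrors the paper's: Step 1 is exactly the paper's \Cref{areaminimpliescal} (calibration transfers to any competitor via $T'-T=\pl R$ and Stokes); Steps 2--3 establish that the oriented tangent planes of $\spt T$ and $\spt T'$ agree on a relatively open patch of one-sided boundary regular points; Step 4 propagates equality by unique continuation from Cauchy data through the connected interior regular set. Step 3 is a correct alternative derivation of the paper's Intersecting Planes Lemma (\Cref{IPL}): you use the interior product $\iota_\omega\vph$ and its metric dual directly, whereas the paper derives the same uniqueness of the conormal from the first cousin principle (\Cref{FirstCousin}), a vanishing-first-derivative argument at a comass maximum. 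Both give the same rigidity statement, and your version is arguably slightly more streamlined for readers fluent in exterior algebra.

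The genuine gap is in Step 2. The local picture supplied by \cite{CDHM} at a boundary regular point is not ``a finite union of sheets with an outermost one'': it is a \emph{single} connected $C^{3,\alpha}$ submanifold $\Sm$ through $p$, cut by $\Gm$ into $\Sm^+$ and $\Sm^-$, with $T\llcorner B_r(p)=Q[[\Sm^+]]+(Q-1)[[\Sm^-]]$. There is no stack of sheets to cut and paste, and ``replace by a single copy of the outermost sheet'' changes the boundary and is not an admissible competitor; the mass comparison you sketch does not go through. The fact you actually need --- that every boundary regular point is one-sided, i.e.\ $Q=1$, when $\Gm$ is connected and $T$ is compactly supported --- is precisely \cite[Corollary 1.10]{CDHM} (quoted as \Cref{thm1}(2) in the paper), and the paper invokes it directly rather than trying to reprove it. Swap your cut-and-paste heuristic for that citation and Step 2 closes; the rest of your argument, including the connectedness of $\reg_i(T)$ you need in Step 4, then matches the machinery already packaged in \Cref{thm1} and \Cref{UCP1}.
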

\begin{rmk}
    \Cref{UOPP} can be extended to integral flat $k$-chains assuming the calibrating flat $k$-cochain $\af$, identified with a $k$-form $\vph_\af$, is merely continuous (\Cref{flatchain}). Doing so, we obtain a sufficient condition for uniqueness in the oriented Plateau problem in codimension one expressed in terms of the regularity of $\vph_\alpha$ (\Cref{plateau}).
\end{rmk}

For general ambient manifolds, \Cref{UOPP} and \Cref{main} do not necessarily hold. To illustrate the issue, let $\mathbb{T}^2 \coloneqq \BR^2 / \BZ^2$ be the flat torus. Consider the points $p = [(0, 0)] = [(1, 0)]$ and $q = [(\frac{1}{2}, 0)]$ on $\mathbb{T}^2$. Then the line segment $\ell_1$ from $(0, 0)$ to $(\frac{1}{2}, 0)$ and the line segment $\ell_2$ from $(1, 0)$ to $(\frac{1}{2}, 0)$ are calibrated by $dx$ and $-dx$, respectively. Both $[[\ell_1]]$ and $[[\ell_2]]$ define area-minimizing $1$-currents on $\mathbb{T}^2$ with boundary $-[[p]] + [[q]]$, yet they are clearly distinct. This non-uniqueness arises from the nontrivial topology of $\mathbb{T}^2$. Specifically, we have $H_1(\mathbb{T}^2; \BR) = \BR^2$. However, with a mild topological assumption we can recover our theorem.

\begin{rmk} \label{ambient}
    With only cosmetic changes, \Cref{UOPP} and all the other results in this paper concerning area-minimizing currents and integral flat chains continue to hold when the ambient manifold $M$ is complete, without boundary, analytic, and satisfies $H_k(M; \BR) = 0$.
\end{rmk}

Before we continue, let us introduce the Cauchy problem since it is the foundation of our proof. Let $\Om \subset \BR^k$ be a bounded $C^{1,1}$ domain and let $L : H^1(\Om; \BR^{n-k}) \rightarrow H^{-1}(\Om; \mathbb{R}^{n-k})$ be the second-order \emph{principally diagonal}\footnote{That is, a linear system having no leading-order coupling.} elliptic partial differential operator of divergence form:
\begin{equation}
    (Lu(\bx))^\sm = \pl_i(a^{ij}(\bx)u_{x^j}^\sm(\bx)) + b_\gm^{\sm s}(\bx) u_{x^s}^\gm(\bx) + c_\gm^\sm(\bx) u^\gm(\bx) \text{ for } \sm = 1,\ldots, n-k, \label{L1}
\end{equation}
where $A := (a^{ij})_{1 \leq i,j\leq k} \in C^{0,1}(\overline{\Om}; \BR^{k\times k})$ is a positive definite symmetric matrix with eigenvalues in $[\lambda, \Lambda]$ for $0 < \lambda < \Lambda < \infty$ and $b_\gm^{\sm s}, c_\gm^\sm \in \CL^\infty(\Om)$. The \emph{Cauchy problem} asks whether there exists a unique $u \in H^1(\Om; \BR^{n-k})$ solving 
\begin{equation}\label{cauchy1}
    \begin{cases}
       Lu(\bx) = 0 \text{ in } \Om \\
        u(\bx) = 0 \text{ and } 
        \pl_\nu u(\bx) = 0 \text{ on } \Gm,
    \end{cases}
\end{equation}
where $\Gm$ is a relatively open portion of $\pl \Om$, $\nu$ is the unit outer normal to $\Gm$, and $L$ is given by \eqref{L1}. Since any solution $u \in H^1(\Om; \BR^{n-k})$ to \eqref{cauchy1} has zero trace on $\Gm$, it is $C^{1,\alpha}$ up to $\Gm$ by elliptic regularity so that $u$ and $\pl_\nu u$ continuously vanish along $\Gm$. By boundary Carleman estimates, if $u$ solves \eqref{cauchy1} then $u \equiv 0$ in $\Om$ (see \cite[Remark 2]{A} and \cite{AKS, AE97}).

Since uniqueness in the Cauchy problem follows from Carleman estimates, it is intimately tied to the unique continuation properties of elliptic operators. For this reason, it is often called \emph{unique continuation from Cauchy data}. We say that a function $u \in H^1(\Om; \BR^{n-k})$ has the \emph{strong unique continuation property} (SUCP) in $\Om$ if $u \equiv 0$ whenever there is a point $\bx_0 \in \Om$ at which $u$ vanishes to infinite order \cite[see (1.7) p. 246]{GL86}. The SUCP for solutions $u$ to $Lw = 0$ in $\Om$ with $L$ in \eqref{L1} was first proved by Aronszajn, Krzywicki, and Szarski in \cite{AKS} using Carleman estimates (see \cite[Remark 3]{A} and \cite{GL86, GL87, KT01}). The condition that $L$ is principally diagonal is necessary, since there are counterexamples to SUCP and uniqueness in \eqref{cauchy1} when we allow for leading-order coupling (see e.g. \cite{Pl63}). Moreover, the results in \cite{A, AKS, GL86, KT01} are essentially sharp, because for each $\alpha \in (0,1)$ there are elliptic scalar operators of divergence and non-divergence forms with leading coefficient functions $C^{0,\alpha}$ for which SUCP fails (see \cite{Pl63, Mi74}).

The observation by Morgan we will need is that, by the fundamental theorem of calculus (FTC), the difference $w := u-v$ of two solutions $u,v \in C^2(\Om; \BR^{n-k})$ to the \emph{minimal surface system} (MSS) on a domain $\Om \subset \BR^k$ (see Section 3.1) satisfies $Lw = 0$ in $\Om$ for a principally diagonal operator $L$ of the form \eqref{L1}. Applying uniqueness in the Cauchy problem gives:
\begin{obs}[{cf. \cite[Lemma 7.2]{Mor82} }]\label{obs1} 
    Any two $C^{2}$ minimal submanifolds that are tangent along a relatively open portion of their boundaries are equal as submanifolds.
\end{obs}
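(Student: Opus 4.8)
The plan is to reduce the statement to uniqueness in the Cauchy problem \eqref{cauchy1} via a local graph representation near the boundary. Let $M_1, M_2$ be two $C^2$ minimal submanifolds of dimension $k$ that share a relatively open boundary portion $\Sigma$ and are tangent along $\Sigma$; that is, at each point $p \in \Sigma$ we have $T_p M_1 = T_p M_2$. Fix such a $p$, choose coordinates so that $T_p M_i = \BR^k \times \{0\} \subset \BR^k \times \BR^{n-k}$ and so that the common boundary $\partial M_i$ near $p$ lies (locally) in a hyperplane of $\BR^k \times \{0\}$, with $\Omega \subset \BR^k$ a half-ball-type $C^{1,1}$ domain and $\Gamma \subset \partial \Omega$ the relatively open flat portion corresponding to $\Sigma$. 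By the implicit function theorem, since both $M_i$ are $C^2$ and tangent to $\BR^k \times \{0\}$ at $p$, there are $C^2$ maps $u, v : \overline{\Omega} \to \BR^{n-k}$ (after shrinking $\Omega$) with $M_i \cap (\text{nbhd of } p)$ equal to the graph of $u$, resp. $v$, over $\Omega$.

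Next I would invoke the fact, recalled in Section~3.1 and used in \Cref{obs1}, that a $C^2$ graph over $\Omega \subset \BR^k$ is minimal if and only if its defining map solves the minimal surface system (MSS). So $u$ and $v$ both solve the MSS on $\Omega$. The tangency hypothesis along $\Sigma$ translates, in these coordinates, into matching Cauchy data: $u = v$ on $\Gamma$ (same boundary submanifold) and $\partial_\nu u = \partial_\nu v$ on $\Gamma$ (same tangent planes along $\Sigma$, hence same normal derivative of the graphing map). Now apply Morgan's observation in the form already stated in the excerpt: by the fundamental theorem of calculus, $w := u - v$ satisfies $Lw = 0$ in $\Omega$ for a principally diagonal second-order elliptic operator $L$ of divergence form \eqref{L1}, whose leading matrix $A = (a^{ij})$ depends on $u, v$ through their first derivatives and is therefore $C^{0,1}(\overline{\Omega})$ with eigenvalues in a fixed interval $[\lambda, \Lambda]$ near $p$ (ellipticity of the MSS), while $w$ has vanishing Cauchy data on $\Gamma$. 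By uniqueness in the Cauchy problem \eqref{cauchy1} — i.e. the boundary Carleman estimates of \cite{A, AKS, AE97} — we conclude $w \equiv 0$, so $u = v$ on $\Omega$ and hence $M_1 = M_2$ in a neighborhood of $p$.

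Finally I would upgrade this local coincidence to a global one. The set $K := \{ p \in M_1 \cap M_2 : M_1 = M_2 \text{ near } p \}$ is open in $M_1$ by construction, is nonempty since it contains $\Sigma$, and is closed in $M_1$: if $p_j \in K$ converge to $p \in \overline{M_1}$, then the graph representations extend past the limit point and the previous paragraph's argument (applied at an interior point, where one does not even need boundary Carleman estimates but can use interior strong unique continuation, or simply the interior version of \Cref{obs1}) shows $M_1 = M_2$ near $p$ as well. By connectedness of $M_1$ — which one obtains either by hypothesis or by noting that a minimal submanifold with connected boundary portion and the relevant structure is connected near $\Sigma$ — we get $M_1 = M_2$ as submanifolds.

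The main obstacle I anticipate is the bookkeeping needed to ensure the hypotheses of the Cauchy uniqueness theorem are genuinely met: one must check that the domain $\Omega$ can be taken $C^{1,1}$ with $\Gamma$ a relatively open portion of $\partial\Omega$ (a matter of choosing good coordinates adapted to the $C^{3,\alpha}$, hence certainly $C^{1,1}$, boundary submanifold), that the leading coefficients $a^{ij}$ built from the quasilinear MSS are Lipschitz up to $\Gamma$ (which follows from $u, v \in C^{2}$, though sharp regularity counterparts are discussed in the excerpt), and — most delicately — that $L$ is \emph{principally diagonal}, i.e. that the difference of two MSS solutions has no leading-order coupling between the $n-k$ components. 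This last point is exactly the structural feature of the minimal surface system that Morgan exploited, and it is what makes the Carleman-estimate machinery applicable; verifying it amounts to writing the MSS in non-divergence or divergence form and observing that the second-order part acts diagonally on the graphing map's components with a common scalar symbol $g^{ij}\partial_i\partial_j$ (the induced inverse metric), so the difference operator inherits the same diagonal leading part.
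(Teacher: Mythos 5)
Your proposal is correct and follows essentially the same route the paper indicates: represent the two submanifolds locally near a boundary point as graphs of $C^2$ solutions $u,v$ of the MSS, observe that tangency along $\Sigma$ translates into vanishing Cauchy data for $w = u-v$, use the FTC to show $w$ solves a principally diagonal divergence-form elliptic system $Lw = 0$ as in \eqref{L1}, and invoke uniqueness in the Cauchy problem \eqref{cauchy1} followed by an open-closed/SUCP argument to globalize. This is exactly the content of the sentence preceding \Cref{obs1} in the paper, which cites Morgan and does not spell the argument out further; you have supplied the bookkeeping the paper leaves implicit. One small imprecision to flag: you state that $\Gamma$ can be taken to be a \emph{flat} portion of $\partial\Omega$ after straightening, but since the shared boundary is only $C^2$ (or $C^{3,\alpha}$ in the paper's main setting) there is no need—and no hope—of flattening it; it suffices, as \Cref{mark} notes, that $\Gamma$ be a $C^{1,1}$ relatively open portion of a $C^{1,1}$ domain, which the graph coordinates deliver directly. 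Your identification of the principally-diagonal structure as the crux is on target: as the proof of \Cref{cauchy2} emphasizes, this is most transparent after passing to the non-divergence form \eqref{MSS2}, where the leading symbol is the common scalar $g^{ij}\partial_i\partial_j$ acting component-wise, and Lipschitz leading coefficients let one move between divergence and non-divergence forms.
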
 
In the setting of area-minimizing integral $k$-currents, Morgan points out that unique continuation from Cauchy data is inherited from the MSS whenever there is sufficient regularity (see \cite[Remark 5.4]{Mor81} and \cite[Lemma 7.2]{Mor82}). However, in general area-minimizing currents tend to exhibit singularities. When the codimension is one, it is well known that, away from its boundary, the support of an area-minimizing integral $k$-current in Euclidean space is an analytic hypersurface outside of a relatively closed subset of Hausdorff dimension at most $k-7$. We refer to this set as the \emph{interior singular set}. For higher codimension area-minimizing integral $k$-currents, Almgren \cite{Alm1} proved that the interior singular set has Hausdorff dimension at most $k-2$ (see also \cite{DS14, DS16a, DS16b}).

Boundary regularity is more subtle. In codimension one, several results are known (see e.g. \cite{All75, HS1}). On the other hand, in higher codimension our understanding remained limited for a long time. Only recently did De Lellis, De Philippis, Hirsch, and Massaccesi \cite{CDHM} establish the first general boundary regularity theorem without restrictions on the codimension or the ambient manifold. In particular, they showed that if $\Gm \subset \BR^n$ and $T \in \CI_{k,c}(\BR^n)$ are as in \Cref{A1}, then the set of \emph{boundary regular points} is open and dense in $\Gm = \spt \pl T$ so that $\spt T$ can be locally represented as an embedded $C^{3,\alpha}$ minimal submanifold (see \Cref{thm1}). 

Let $\Gm \subset \BR^n$, let $T,T^\prime \in \CI_{k,c}(\BR^n)$ be as in \Cref{A1}, and suppose $T$ is smoothly calibrated by $\vph \in \CE^k(\BR^n)$. \Cref{UOPP} now follows from a couple of simple observations. The first is that $T^\prime$ is also calibrated by $\vph$ (\Cref{areaminimpliescal}). Then, the first cousin principle (\Cref{FirstCousin}), together with a rigidity theorem for calibrated $k$-planes (\Cref{IPL}) and the boundary regularity theory, implies that the \emph{true} tangent spaces for $\spt T$ and $\spt T^\prime$ coincide on a relatively open portion of $\Gm$. This allows us to apply \Cref{obs1}, from which uniqueness follows.

The paper is organized as follows. Section 2 introduces the basic notation and definitions concerning the MSS, as well as area-minimizing and calibrated integral currents. In Section 3, we briefly recall the interior regularity theory for Lipschitz stationary solutions to the MSS \eqref{MSS}, and then outline Morgan's unique continuation arguments in \cite[Theorem 7.1]{Mor82} adapted to our setting. We have stated a (slightly) more general version of uniqueness in the Cauchy problem than that found in \cite[Lemma 7.2]{Mor82} using the partial regularity theorem in \cite[Theorem 3.7]{BD2}. In addition, we have included a survey of the regularity theory for area-minimizing integral currents since it is fundamental to our proof. Section 4 contains a general version of \Cref{UOPP} (i.e. \Cref{main}), along with a short discussion on the sharpness of the hypotheses in \Cref{UOPP} and the extension to integral flat chains. In Section 5, we cover \Cref{ambient} in depth using a certain special Lagrangian sphere to demonstrate the importance of the topological constraint on the ambient space. 

Due to its relevance to the present paper and the lack of a precise reference, we have included a proof of the following folklore result in Appendix A: \emph{If a compactly supported area-minimizing integral current is ``extendable", then it is unique}. We compare this with \Cref{UOPP}, and recover two well-known cases: uniqueness in the oriented Plateau problem for regular area-minimizing cones bounded by their link (\Cref{cone}), and uniqueness of restrictions of $C^{3,\alpha}$ graphical hypersurfaces over convex domains \cite[Theorem 2]{Ha77}.

%%%%%%%%%%%%%%%%%%%%%%%%%%%%%%%%
\begin{ack*}
   The authors thank Rick Schoen for familiarizing them with the problem, Jacek Rzemieniecki for pointing out the first cousin principle (i.e. \Cref{FirstCousin}), and Zhenhua Liu for bringing Federer's work \cite{Fed74} to their attention, thus motivating \Cref{flatchain}. The authors extend a special thanks to Frank Morgan for suggesting the sharp hypotheses needed on the calibrating form, and for providing additional examples of non-uniqueness. 
   
   The first author thanks Connor Mooney for his support and guidance, as well as Zihui Zhao for an informative conversation on the Cauchy problem and the boundary regularity theory for area-minimizing currents at the RMMC Summer School, 2025. The second author thanks Nick Edelen for his many insightful comments, and Chung-Jun Tsai for pointing out the obstruction for general ambient spaces. He also thanks National Taiwan University, where part of this work was carried out, for their hospitality. The first author was supported by C. Mooney's NSF CAREER Grant DMS-2143668, as well as NSF RTG Grant DMS-2342135. 
\end{ack*}
%%%%%%%%%%%%%%%%%%%%%%%%%%%%%%%%
\section{\textbf{Preliminaries}}
This section serves as a short primer on the MSS and area-minimizing integral currents. For a thorough account, see \cite{GM12, Sim18, HL82}.

\vspace{-9pt}
\subsection{Minimal surface system}
Let $\Om \subset \BR^k$ be a domain (not necessarily bounded) with Lipschitz boundary and suppose $u \in C_{\text{loc}}^{0}(\overline{\Om}; \BR^{n-k}) \cap C_{\text{loc}}^{0,1}(\Om; \BR^{n-k})$. Then its graph $\Sm_u$ is a $k$-dimensional Lipschitz submanifold of $\BR^n$ with boundary 
\[\pl \Sm_u := \{(\bx,u(\bx)) : \bx \in \pl \Om \subset \BR^k\} \subset \BR^n.\] 
Define 
\begin{equation*}
    F: \BR^{(n-k)\times k} \rightarrow \BR \text{ by } F(\bp) := \sqrt{\det( I + \bp^T\bp)}.
\end{equation*}
Then the $k$-dimensional surface area of $\Sm_u$ is given by the formula
\begin{equation*} \label{areafunctional}
    \CA(Du) := \int_{\Sm_u} \, d\CH^k = \int_\Om \sqrt{\det g(Du(\bx))}\, d\bx = \int_\Om F(Du(\bx)) \, d\bx,
\end{equation*}
where $\CH^k$ is the $k$-dimensional Hausdorff measure and the induced metric $g$ is $\CH^k$-a.e. defined by
\begin{equation*}
    g(Du(\bx)) := I + Du(\bx)^TDu(\bx) = (\delta_{ij} + u_{x^i}(\bx) \cdot u_{x^j}(\bx))_{1 \leq i,\,j \leq k}. 
\end{equation*}
The submanifold $\Sm_u$ can be identified with a stationary varifold $V_u:= \underline{v}(\Sm_u, 1)$ (see Section 2.3.1) if and only if $u$ is a \emph{stationary solution} to the MSS in $\Om$:
\begin{equation}
    \begin{cases}
        \partial_i(F_{p_i^\gm}(Du(\bx))u_{x^j}^\gm(\bx) - F(Du(\bx))\delta_{ij}) = 0 \text{ for } j = 1,\ldots, k \\
         \pl_i(F_{p_i^\sm}(Du(\bx))) = 0 \text{ for } \sm = 1,\ldots, n-k,
    \end{cases} \label{MSS}
\end{equation}
where $DF(\bp) := (F_{p_i^\sigma}(\bp))_{1 \leq i \leq k}^{1 \leq \sigma \leq n-k}$ is viewed as a map $\BR^{(n-k) \times k} \rightarrow \BR^{(n-k) \times k}$. The first equation in \eqref{MSS} is called the \emph{inner variation system} for $\CA$, while the second is called the \emph{outer variation system}, or \emph{Euler--Lagrange system}, for $\CA$. When $k = n-1$ (i.e. $\Sm_u$ has codimension one), the outer variation equation reduces to the \emph{minimal surface equation} (MSE), which is the quasi-linear divergence form scalar equation with coefficients
\begin{equation}
   a^{ij}(Du(\bx)) := \frac{1}{\sqrt{1 + |Du(\bx)|^2}}\Bigg(\delta_{ij} - \frac{u_{x^i}(\bx) u_{x^j}(\bx)}{\sqrt{1 + |Du(\bx)|^2}}\Bigg). \label{MSE}
\end{equation}

When $u \in C_{\text{loc}}^2(\Om; \BR^{n-k}) \cap C_{\text{loc}}^{0}(\overline{\Om}; \BR^{n-k})$, the MSS is equivalent to the following quasi-linear elliptic system in non-divergence form:
\begin{equation}
    g^{ij}(Du(\bx))u_{x^i x^j}^\sigma(\bx) = 0 \text{ for each } \sigma = 1,\ldots, n-k \text{ in } \Om, \label{MSS2}
\end{equation}
where we have set $g^{-1} := (g^{ij})_{1 \leq i,j \leq k}$. Note that $g^{-1}$ is positive definite with eigenvalues in $[\lambda, \Lambda] \subset (0,\infty)$ for some numbers $\lambda, \Lambda$ depending on the Lipschitz constant for $u$. Any solution $u$ solving \eqref{MSS2} is called a \emph{classical solution} to the MSS. For a given Lipschitz stationary solution $u$ to \eqref{MSS}, we will write $\sing(u) \subset \Om$ for the set of points at which $u$ fails to be $C^1$ (i.e. the \emph{singular set} for $u$). We set $\reg(u) \coloneqq \Om \setminus \sing(u)$.

\subsection{Integral currents}
For fixed $n \in \BN$, we define $\Lambda^1(\BR^n) := (\BR^n)^*$. That is, $\Lambda^1(\BR^n)$ is the space of linear functionals $\ell: \BR^n \rightarrow \BR$. For each $k \geq 2$, we will write $\Lambda^k(\BR^n)$ for the space of alternating $k$-linear functions (i.e. $k$-covectors). The space of smooth $k$-forms on $\BR^n$ is denoted by $\CE^k(\BR^n) \coloneqq C^\infty(\BR^n; \Lambda^k(\BR^n))$. We will write $\CD^k(\BR^n)$ for the space of smooth $k$-forms with compact support, which is a subspace of $\CE^k(\BR^n)$ but equipped with the usual locally convex topology. The space of $k$-vectors is denoted by $\Lambda_k(\BR^n)$. A $k$-vector is called \emph{simple} if $\xi = v_1 \w v_2 \w \cdots \w v_k$, where $v_i \in \Lambda_1(\BR^n)$ for all $i = 1, \ldots, k$ and $\{v_1,\ldots, v_k\}$ forms a linearly independent set. 

Note that we can identify the oriented linearly independent set $\langle v_1, \ldots, v_k \rangle$ with an oriented $k$-plane in $\BR^n$ passing through the origin. In the same vein, such a $k$-plane in $\BR^n$ can be represented by the simple $k$-vector formed as a wedge product of all elements in its oriented basis. That is, a simple $k$-form is equivalent to an oriented $k$-plane passing through the origin, up to a scalar factor. We will therefore write 
\[G(k, \BR^n) := \{\xi \in \Lambda_k(\BR^n):  |\xi| = 1 \text{ and } \xi \text{ is simple}\}\]
for the Grassmanian of oriented $k$-planes in $\BR^n$ through the origin. The \emph{comass norm of $\vph \in \mathcal{\CE}^k(\BR^n)$ at $p \in \BR^n$} is then given by 
\begin{equation}
    ||\vph||_p := \sup_{\xi \in G(k,\BR^n)} \langle \vph_p, \xi \rangle, \label{cmassp}
\end{equation}
where $\langle \cdot, \cdot \rangle$ is the dual pairing\footnote{Formally, $\langle \vph_p,\xi \rangle := \vph_p(\xi)$ for all $\xi \in \Lambda_k(\BR^n)$.} for $\Lambda^k(\BR^n)$ and $\Lambda_k(\BR^n)$. 
The \emph{comass norm of $\vph \in \CE^k(\BR^n)$}, denoted by $||\vph||$, is the supremum in \eqref{cmassp} over $p \in \BR^n$. We say $\vph$ has \emph{comass one} if $||\vph||=1$.

The space $\mathcal{D}_k(\BR^n)$ will represent the continuous dual space of $\CD^k(\BR^n)$ (i.e. the \emph{$k$-currents}). For any $T \in \mathcal{D}_k(\BR^n)$, we define $\pl T \in \mathcal{D}_{k-1}(\BR^n)$ (i.e. the \emph{boundary of $T$}) by \[\partial T(\vph) := T(d\vph) \text{ for all } \vph \in \CD^{k-1}(\BR^n).\] For $T \in \CD_k(\BR^n)$, the \emph{mass of $T$} is given by
\begin{equation*}
    \mathbb{M}(T) := \sup_{\vph \in \CD^k(\BR^n)}\{T(\vph): ||\vph|| \leq 1 \}.
\end{equation*}
If $U \subset \BR^n$ is an open set, we can define the \emph{mass of $T$ in $U$}, denoted by $\BM_U(T)$, by instead taking the supremum over those $\vph$ with $\spt \vph \subset U$. We say $T$ has \emph{finite mass} if $\BM(T) < \infty$, and $T$ has \emph{locally finite mass} if $\BM_W(T) < \infty$ for every open $W \subset\subset \BR^n$. If both $T$ and $\pl T$ have (locally) finite mass, then $T$ is called (\emph{locally}) \emph{normal}.
\begin{defn}[Integral Current] \label{integralcurrent}
    If $T \in \mathcal{D}_k(\BR^n)$, we say that $T$ is a \emph{locally rectifiable integral $k$-current} (abbreviated as \emph{integral $k$-current}) if $T$ is locally normal and \begin{equation*}
        T(\vph) = \int_{M_T} \langle \vph_p ,\xi_T(p) \rangle \theta_T(p) \, d\mathcal{H}^k(p) \text{ for all } \vph \in \CE^k(\BR^n), 
    \end{equation*} where $M_T \subset \BR^n$ is a $k$-rectifiable Borel set, $\theta_T \in \CL_{\text{loc}}^1(M_T; d\mathcal{H}^k)$ is a positive integer valued function on $\BR^n$, and $\xi_T: M_T \rightarrow \Lambda_k(\BR^n)$ is a $\CH^k$-measurable function such that for $\mathcal{H}^k$-a.e. $p \in M_T$ we have $\xi_T(p) = e_1 \w \cdots \w e_k$, where $\{e_1,\ldots,e_k\}$ is an orthonormal basis for the approximate tangent space $T_p M_T$. It is common to write $T = \underline{\tau}(M_T, \ta_T, \xi_T)$.
\end{defn}

\begin{rmk}
    Federer and Fleming's boundary rectifiability theorem (cf. \cite[Chapter 6, Theorem 6.3]{Sim18}) shows that if $T$ is an integral $k$-current, then $\pl T$ is an integral $(k-1)$-current.
\end{rmk}

The function $\theta_T$ is the \emph{multiplicity function} for $T$ and $\xi_T$ is called the \emph{orientation} for $T$. We will write $\|T\| \coloneqq \CH^k \llcorner \ta_T \llcorner M_T$ for the \emph{mass measure} of $T$. Note that $\BM_U(T) = \|T\|(U)$ for every Borel set $U \subset \BR^n$. Moreover, $\spt T = \spt \|T\|$, where $\spt \|T\|$ is the support of $\|T\|$ in the usual sense of Radon measures. We shall write $\CI_k(\BR^n)$ for the set of all integral $k$-currents. The subset of currents in $\CI_k(\BR^n)$ with compact support is denoted by $\CI_{k,c}(\BR^n)$. 

\begin{ex}[Lipschitz Submanifolds]
    Suppose $\Sm \subset \BR^n$ is a $k$-dimensional oriented embedded Lipschitz submanifold with $\CH^k(\Sigma) + \CH^{k-1}(\partial \Sigma) < \infty$ and $\CH^k$-a.e. defined orientation $\xi_\Sm : \Sm \rightarrow \Lambda_k(
\BR^n)$. Then there is a unique integral $k$-current $[[\Sm]] \in \CI_k(\BR^n)$ associated to $\Sm$ defined by taking $M_T = \Sm$, $\theta_T = \theta_\Sm \equiv 1$, and $\xi_T = \xi_\Sm$ in \Cref{integralcurrent}.
\end{ex}

Federer and Fleming show that the homology of the chain complex of compactly supported currents is isomorphic to the singular homology \cite[Theorem 5.11]{FF60}. As a consequence, we have:
\begin{lem} \label{closed=exact}
    If $T \in \CI_{k, c}(\BR^n)$ satisfies $\pl T = 0$, then $T = \pl S$ for some $S \in \CI_{k+1, c}(\BR^n)$. If $T$ is a normal $k$-current with compact support and satisfies $\pl T = 0$, then $T = \pl S$ for some normal $(k+1)$-current $S$ with compact support.  
\end{lem}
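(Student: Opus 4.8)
\textbf{Proof proposal for \Cref{closed=exact}.}

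The plan is to deduce both assertions from Federer and Fleming's theorem that the homology of the chain complex of compactly supported integral currents (resp.\ compactly supported normal currents) computes the singular homology of $\BR^n$ \cite[Theorem 5.11]{FF60}. Since $\BR^n$ is contractible, $H_k(\BR^n; \BZ) = 0$ for all $k \geq 1$, and likewise $H_k(\BR^n; \BR) = 0$ for $k \geq 1$. First I would treat the case $k \geq 1$. Given $T \in \CI_{k,c}(\BR^n)$ with $\pl T = 0$, the current $T$ is a cycle in the complex $(\CI_{*,c}(\BR^n), \pl)$, hence defines a class in the $k$-th homology of that complex. By the Federer--Fleming isomorphism this homology group is $H_k(\BR^n; \BZ) = 0$, so $[T] = 0$; that is, $T = \pl S$ for some $S \in \CI_{k+1,c}(\BR^n)$. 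The normal case is identical, using that the complex of compactly supported normal currents also computes $H_k(\BR^n; \BR)$, which vanishes for $k \geq 1$.

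It remains to handle the low-degree degenerate case. When $k = 0$, a compactly supported integral $0$-current automatically satisfies $\pl T = 0$ (there are no $(-1)$-currents), so the hypothesis $\pl T = 0$ is vacuous, and the statement would be asking that every such $T$ be a boundary. This is false in general (e.g.\ $T = [[p]]$ for a single point $p$, whose total mass/degree is $1 \neq 0$), so I would note that the statement is implicitly understood for $k \geq 1$, or alternatively that the ambient space is connected so that the relevant reduced homology is what appears; in the applications in this paper $k \geq 1$ always. With that understood, the argument above is complete. One can also give a direct, self-contained proof for the case $U = \BR^n$ by a cone construction: choosing a point $x_0$ and setting $S := x_0 \KN T$ (the cone over $T$ with vertex $x_0$), one has $\pl S = T - x_0 \KN \pl T = T$ when $\pl T = 0$, and $S$ has compact support and is integral (resp.\ normal) whenever $T$ is; this avoids invoking the full Federer--Fleming machinery.

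The main obstacle I anticipate is purely bookkeeping rather than conceptual: one must make sure the cited isomorphism \cite[Theorem 5.11]{FF60} is applied to the correct chain complex (compactly supported integral currents with integer coefficients for the first assertion, compactly supported normal currents for the second) and that ``compactly supported'' is preserved by the chain homotopy / primitive produced --- which it is, since the primitive can be taken supported in a neighborhood of $\spt T$, e.g.\ via the cone construction with vertex near $\spt T$. No delicate regularity or measure-theoretic estimate is needed; the content is entirely the contractibility of $\BR^n$ combined with Federer and Fleming's identification of currents homology with singular homology.
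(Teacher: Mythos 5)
Your primary argument — contractibility of $\BR^n$ combined with the Federer--Fleming isomorphism \cite[Theorem 5.11]{FF60} between the homology of compactly supported integral (resp.\ normal) currents and singular homology — is exactly how the paper derives this lemma, so the approach matches. Your cone construction $S := x_0 \KN T$ with $\pl S = T - x_0 \KN \pl T = T$ is a valid and more elementary alternative that the paper does not spell out, and your observation about the $k = 0$ degenerate case is correct as a matter of logic, though moot here since \Cref{A1} always enforces $k \geq 1$.
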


\subsection{Area-minimizing integral currents}
Of particular interest in the present paper are the area-minimizing integral $k$-currents.
\begin{defn} \label{areamin}
    Let $U \subset \BR^n$ be an open set. An integral $k$-current $T \in \CI_k(\BR^n)$ is \emph{area-minimizing in $U$} if for every open $W \subset\subset U$ we have \begin{equation*}
        \BM_W(T) \leq \BM_W(T + S) \text{ for all } S \in \CI_{k, c}(\BR^n) \text{ with } \pl S = 0 \text{ and } \spt S \subset W.
    \end{equation*} We say $T$ is \emph{globally area-minimizing} if $U = \BR^n$.
\end{defn}

Throughout the paper, we will often consider the case when $T$ is a globally area-minimizing integral current and has non-zero boundary $\pl T := [[\Gm]]$, where $\Gm \subset \BR^n$ satisfies \Cref{A1}. If $T$ has compact support, then $T$ has the least mass among all compactly supported integral currents with boundary $[[\Gm]]$. If $T$ is not compactly supported, then $T$ has the least mass among all ``compact perturbations'' of its support.

\subsubsection{Rectifiable varifolds}
Let $M \subset \BR^n$ be a $k$-rectifiable Borel set and $\ta \in \CL^1_{loc}(M; d\CH^k)$. A \emph{rectifiable $k$-varifold} $\underline{v}(M, \ta)$ is the equivalence class of all pairs $(M^\prime, \ta^\prime)$, where $M^\prime$ is $k$-rectifiable with $\CH^k(M \Dt M^\prime) = 0$ and $\ta = \ta^\prime$ $\CH^k$-a.e. on $M \cap M^\prime$. In particular, given an integral current $T$, the varifold associated with $T$ is just the equivalence class $\underline{v}(M_T, \ta_T)$, i.e. forgetting the orientation $\xi_T$. A rectifiable varifold is called \emph{stationary} if the first variation vanishes. Loosely speaking, stationary rectifiable varifolds can be thought of as Lipschitz minimal submanifolds (not necessarily oriented). The varifold associated with an area-minimizing current is stationary (see \cite[Chapter 7, Lemma 1.2]{Sim18}). Thus, area-minimizing integral currents can be heuristically thought of as oriented Lipschitz minimal submanifolds having the least surface area (i.e. mass) with respect to their boundaries. We refer the interested reader to \cite[Chapter 4]{Sim18} for more on rectifiable varifolds.

\subsubsection{Densities}
Let $U \subset \BR^n$ be an open set, and $p \in U$. For $T \in \CI_{k}(\BR^n)$ that is area-minimizing in $U$ and satisfies $\spt T \subset U$, the \emph{density of $T$ at $p$} is defined by \begin{equation}
    \Theta_T(p) = \lim_{r \rightarrow 0} \frac{\|T\|(B^n_r(p))}{\om_k r^k} \label{dense}
\end{equation} whenever the latter limit exists. Here, $\om_k$ denotes the volume of the unit $k$-ball. Whenever $p \notin \spt(\pl T)$, the existence of this limit is always guaranteed by the monotonicity formula for stationary varifolds \cite[Chapter 4, Equation 3.8]{Sim18}. The monotonicity formula also shows that $\Theta_T(p)$ is an upper semi-continuous function of $p$. We finally note that $\Theta_T(p) = \ta_T(p)$ for $\CH^k$-a.e. $p \in \spt T$ (see \cite[Chapter 3, Remark 1.8]{Sim18}). Therefore, $T$ has the \emph{canonical representation} \begin{equation} \label{cano}
    T = \underline{\tau}(\spt T, \Theta_T, \xi_T).
\end{equation}

\subsection{Smoothly calibrated integral currents}

Let $\vph \in \CE^k(\BR^n)$ have comass one. For $p \in \BR^n$, we write \begin{equation*}
    \CG_p(\vph) \coloneqq \{\xi \in G(k, \BR^n): \langle \vph_p, \xi \rangle = 1\}
\end{equation*} for the collection of planes where $\vph_p$ achieves its maximum. Such $\CG_p(\vph)$ is called the \emph{contact set of $\vph$ at $p$}. We also define \begin{equation*}
    \CG(\vph) \coloneqq \bigcup_{p \in \BR^n} \mathcal{G}_p(\vph).
\end{equation*}

\begin{defn}
    A smooth $k$-form $\vph \in \CE^k(\BR^n)$ is called a \emph{calibration} if $\vph$ has comass one and $d \vph = 0$. Let $U \subset \BR^n$ be an open set. An integral $k$-current $T \in \CI_k(\BR^n)$ is said to be \emph{calibrated by $\vph$ in $U$} if $\xi_T(p) \in \CG_p(\vph)$ for $\|T\|$-a.e. $p \in U$.
\end{defn}

Calibrated currents achieve their mass when acting on the calibration, and are area-minimizing.

\begin{lem}[{cf. \cite[Chapter II, Lemma 3.5]{HL82}}] \label{calibrated}
    Let $\vph \in \CE^k(\BR^n)$ be a calibration and $T \in \CI_k(\BR^n)$. Then \[(T \llcorner W)(\vph) \leq \BM_W(T)\] for every open $W \subset\subset \BR^n$, and the equality holds if and only if $T$ is calibrated by $\vph$ in $W$.
\end{lem}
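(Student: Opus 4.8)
The plan is to reduce the inequality to a pointwise estimate on the integrand of the canonical integral representation of $T$, and then to read off the equality case from the fact that the resulting integrand is pointwise nonnegative.

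First I would write $T = \underline{\tau}(M_T,\theta_T,\xi_T)$, so that for $\CH^k$-a.e.\ $p \in M_T$ the orientation $\xi_T(p)$ is a unit simple $k$-vector, i.e.\ $\xi_T(p) \in G(k,\BR^n)$. Since $W \subset\subset \BR^n$ and an integral current has locally finite mass, $\|T\|(W) = \int_{M_T \cap W}\theta_T\, d\CH^k < \infty$; hence $T \llcorner W$ has finite mass with compact support in $\overline W$, so by the Riesz-type representation for currents of finite mass it extends to a $\Lambda_k(\BR^n)$-valued Radon measure of finite total variation and in particular pairs with the smooth (hence locally bounded) form $\vph$ via
\[
(T\llcorner W)(\vph) = \int_{M_T \cap W}\langle \vph_p, \xi_T(p)\rangle\,\theta_T(p)\, d\CH^k(p).
\]
Because $\vph$ has comass one, the very definition of the pointwise comass norm gives $\langle \vph_p, \xi_T(p)\rangle \leq \|\vph\|_p \leq 1$ for $\CH^k$-a.e.\ $p \in M_T$; as $\theta_T \geq 0$, substituting this bound yields
\[
(T\llcorner W)(\vph) \leq \int_{M_T \cap W}\theta_T(p)\, d\CH^k(p) = \|T\|(W) = \BM_W(T),
\]
the last equality being the identity $\BM_U(T) = \|T\|(U)$ recorded in the preliminaries. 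Note that $d\vph = 0$ plays no role here; it enters only when this estimate is upgraded to the minimality of calibrated currents.

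For the equality case I would subtract the two sides: equality holds if and only if $\int_{M_T\cap W}\bigl(1 - \langle\vph_p,\xi_T(p)\rangle\bigr)\,\theta_T(p)\, d\CH^k(p) = 0$. Since the integrand is nonnegative ($\langle\vph_p,\xi_T(p)\rangle \le 1$ and $\theta_T > 0$ $\CH^k$-a.e.\ on $M_T$), the integral vanishes precisely when $\langle\vph_p,\xi_T(p)\rangle = 1$ for $\CH^k$-a.e.\ $p \in M_T \cap W$, equivalently for $\|T\|$-a.e.\ $p \in W$; and $\langle\vph_p,\xi_T(p)\rangle = 1$ forces $\|\vph\|_p = 1$ with $\xi_T(p) \in \CG_p(\vph)$, which is exactly the definition of $T$ being calibrated by $\vph$ in $W$. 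I do not anticipate any substantive obstacle: the only two points that deserve a word of justification are that $T \llcorner W$, defined a priori only on $\CD^k(\BR^n)$, may be evaluated on the non-compactly-supported smooth form $\vph$ (standard, as $T\llcorner W$ has finite mass and compact support), and the elementary identity $\BM_W(T) = \|T\|(W)$. I would attribute the statement to \cite[Chapter II, Lemma 3.5]{HL82}.
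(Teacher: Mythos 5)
The paper states this lemma by citation to \cite[Chapter II, Lemma 3.5]{HL82} and supplies no in-text proof, so there is no internal argument to compare against. Your proof is correct and is the standard Harvey--Lawson computation: write $T\llcorner W$ via its integral representation, apply the pointwise comass bound $\langle\vph_p,\xi_T(p)\rangle\le \|\vph\|_p \le 1$, and read the equality case off from the nonnegativity of $(1-\langle\vph_p,\xi_T(p)\rangle)\theta_T$; the one point you flag --- evaluating $T\llcorner W$ on the non-compactly-supported $\vph$ --- is already built into \Cref{integralcurrent} and is unproblematic since $M_T\cap W$ has finite $\|T\|$-measure for $W\subset\subset\BR^n$.
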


\begin{lem}[{cf. \cite[Chapter II, Corollary 4.5]{HL82}}] \label{calibratedimpliesmin}
    Fix $1 \leq k \leq n-1$. Let $U \subset \BR^n$ be an open set, and let $\vph \in \CE^k(\BR^n)$ be a calibration. Suppose that $T \in \CI_k(\BR^n)$ is calibrated by $\vph$ in $U$. Then $T$ is area-minimizing in $U$.
\end{lem}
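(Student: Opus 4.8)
The plan is to run the classical \emph{fundamental theorem of calibrations} argument, now packaged so that it relies only on \Cref{calibrated} and \Cref{closed=exact}. Fix an open set $W \subset\subset U$ and a competitor $S \in \CI_{k,c}(\BR^n)$ with $\pl S = 0$ and $\spt S \subset W$; by \Cref{areamin} it suffices to prove $\BM_W(T) \leq \BM_W(T+S)$ (and we may assume $\BM_W(T+S) < \infty$, which in any case holds since $T+S$ is integral). Since $T$ is calibrated by $\vph$ in $U$ and $W \subset U$, it is calibrated by $\vph$ in $W$, so the equality clause of \Cref{calibrated} gives $(T\llcorner W)(\vph) = \BM_W(T)$. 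On the other hand, because $\spt S \subset W$ we have $(T+S)\llcorner W = (T\llcorner W) + S$, and \Cref{calibrated} applied to $T+S$ yields $((T+S)\llcorner W)(\vph) \leq \BM_W(T+S)$.

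The one point that needs a word is that $S(\vph)=0$: then $((T+S)\llcorner W)(\vph) = (T\llcorner W)(\vph) + S(\vph) = \BM_W(T)$, and chaining this with the previous inequality gives exactly $\BM_W(T) \leq \BM_W(T+S)$. Since $W \subset\subset U$ and $S$ were arbitrary, $T$ is area-minimizing in $U$. To see $S(\vph)=0$, note $S$ is a compactly supported integral $k$-current with $\pl S = 0$, so by \Cref{closed=exact} we may write $S = \pl R$ with $R \in \CI_{k+1,c}(\BR^n)$; then, since $R$ has compact support, Stokes' theorem extends the defining relation to all smooth forms and we get $S(\vph) = (\pl R)(\vph) = R(d\vph) = 0$ because $\vph$ is a calibration, hence closed.

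The only thing to be careful about—and it is bookkeeping rather than analysis—is that the calibration $\vph$ need not be compactly supported, so one must check that each pairing appearing above is legitimate: $T\llcorner W$ and $(T+S)\llcorner W$ are rectifiable currents of finite mass and hence act on all of $\CE^k(\BR^n)$ (this is built into \Cref{integralcurrent} and is exactly the generality in which \Cref{calibrated} is stated), and $\pl R$ pairs with $\vph \in \CE^k$ by the compact support of $R$; equivalently one may insert a cutoff $\chi \in C_c^\infty(\BR^n)$ with $\chi \equiv 1$ on a neighborhood of $\spt R$, so that $S(\vph) = S(\chi\vph) = R\big(d\chi \wedge \vph + \chi\, d\vph\big) = R(d\chi \wedge \vph) = 0$ since $d\chi$ vanishes on $\spt R$. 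I do not anticipate any genuine obstacle here; the whole proof is two applications of \Cref{calibrated} plus the closed-implies-exact input of \Cref{closed=exact}.
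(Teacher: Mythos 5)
Your argument is correct and is exactly the standard calibration argument: the paper does not prove this lemma itself but cites it to Harvey--Lawson (Chapter II, Corollary 4.5 of \cite{HL82}), and your derivation via two applications of \Cref{calibrated} plus the closed-implies-exact input of \Cref{closed=exact} (and the cutoff to handle $S(\vph)$ for noncompactly supported $\vph$) is the canonical proof. It is also the same toolkit the paper itself deploys in the proof of the companion \Cref{areaminimpliescal}, so your proposal is fully consistent with the paper's framework.
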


There are many examples of calibrations and calibrated currents, the easiest example being the current associated to the coordinate plane $\BR^k \times \{0\} \subset \BR^{k} \times \BR^{n-k}$ calibrated by $\vph(\bx) := dx^1 \wedge \cdots \wedge dx^k$, $\bx := (x^1, \ldots, x^n) \in \BR^n$. We provide a few historically important examples, but refer the reader to \cite{HL82} for a full treatment.

\begin{ex}[Lipschitz Stationary Graphs] \label{graph_calibrated}
   Let $\Om \subset \BR^{n}$ be a bounded $C^{2}$ mean convex domain. Suppose $u \in C^\infty(\Om) \cap C^{0}(\overline{\Om})$ is a Lipschitz stationary solution to the MSE \eqref{MSE}, whose existence for fixed continuous boundary data is guaranteed by \cite[Theorem 13.6]{Gi84}. Then the volume form on its graph $\Sm_u$ is given by 
   \begin{equation*}
       dvol_{\Sm_u} =  \frac{(-1)^{n}}{\sqrt{1 + |Du|^2} } dx^1 \wedge \cdots \wedge dx^n + \sum_{i = 1}^{n} \frac{(-1)^i u_{x^i}}{\sqrt{1 + |Du|^2} } \, dx^1 \wedge \cdots \wedge \widehat{dx^i} \wedge \cdots \wedge dx^n \wedge dy.
   \end{equation*} 
   The $n$-form $\vph(\bx,y)$ defined by extending the right-hand side of the equality above to $U := \Om \times \BR$ is a calibration and $[[\Sm_u]]$ is calibrated by $\vph$ in $U$. That $\vph$ has comass one can be checked readily, and $d\vph \equiv 0$ in $U$ since $u$ solves \eqref{MSE} in $\Om$. It follows that if $u$ solves \eqref{MSE}, then $[[\Sm_u]]$ is area-minimizing in $U$. If $\Om$ is convex, then by the \emph{convex hull property} \cite[Corollary 2]{Ha77}, $[[\Sm_u]]$ is globally area-minimizing (see \cite[Theorem 2]{Ha77}). A topological hypothesis on the boundary is necessary to ensure global uniqueness (cf. \cite{HLL87}). Since there are higher codimension stationary solutions to \eqref{MSS} over the unit ball which are not stable critical points of $\CA$ (see e.g. \cite[Theorem 5.1]{LO1}), this example does not extend to \eqref{MSS}.
\end{ex}

\begin{ex}[Complex Submanifolds in $\BC^n$] \label{complexsubmfd}
    We say that $\Sm \subset \mathbb{C}^n$ is a $k$-dimensional \emph{complex submanifold} (real dimension $2k$), with or without boundary, if $T_p \Sm$ is a $k$-dimensional complex vector subspace of $\mathbb{C}^n$ for each $p \in \Sm$. Using the identification $\mathbb{C}^n \cong \mathbb{R}^n + \sqrt{-1} \, \mathbb{R}^n$, it is common to identify $\mathbb{C}^n$ with $\mathbb{R}^{2n}$ in the coordinates \[\bz := (z^1,\ldots, z^n) = (x^1, \ldots, x^n, y^1, \ldots, y^n), \text{ } z^k := x^k + \sqrt{-1} \, y^k.\] If we define the symplectic form \begin{equation}
    \omega(\bz) := \frac{1}{2 \sqrt{-1}} \sum_{j = 1}^n dz^j \wedge d \overline{z}^j =\sum_{j = 1}^n dx^j \wedge dy^j, \label{symplectic}
\end{equation}
then 
\[\vph(\bz) := \frac{\om^k(\bz)}{k!} := \frac{1}{k!}\underbrace{\om(\bz) \wedge \cdots \wedge \om(\bz)}_{k \text{ times}}\]
is a closed complex $k$-form. By Wirtinger's inequality \cite[Section 1.8.2]{Fed69}, $\vph$ is a calibration, and $[[\Sm]]$ is calibrated by $\vph$ whenever $\Sm$ is a $k$-dimensional complex submanifold of $\BC^n$. Common examples include the graph current $[[\Sm_f]]$ in $\BC^2$ where $f: \Om \subset \BC \rightarrow \BC$ is holomorphic and $\Om$ is a domain or, more generally, $[[Z(f)]]$ where $Z(f)$ is the zero set of a holomorphic function $f: \Om \times \BC \rightarrow \BC$ having no critical points in $Z(f)$.
\end{ex}

\begin{ex}[Special Lagrangian Submanifolds in $\BC^n$]
    As in \Cref{complexsubmfd}, we identify $\BC^n \cong \BR^{2n}$ and let $\om$ be the symplectic form \eqref{symplectic}. A real $n$-dimensional submanifold $\Sm \subset \BC^n$ is called a \emph{Lagrangian submanifold} if $\om \big|_\Sm = 0$. Let $d\bz \coloneqq dz^1 \w \cdots \w dz^n = \re(d\bz)+ \im(d\bz)$ be a holomorphic $n$-form on $\BC^n$. A Lagrangian submanifold $\Sm \subset \BC^n$ is called \emph{special} if, in addition, $\im(d \bz)\big|_\Sm = 0$. In this case, $\vph(\bz) = \re(d \bz)$ (as a real $n$-form) is a calibration, and $[[\Sm]]$ is calibrated by $\vph$ whenever $\Sm$ is a special Lagrangian submanifold in $\BC^n$.

    For any $C^2$ function $u: \Om \subset \BR^n \rightarrow \BR$ defined on a domain $\Om$, the graph $\Sm_{Du} \subset \BR^{2n} \cong \BC^n$ of $D u$ is a Lagrangian submanifold in $\BC^n$ (see \cite[Chapter III, Lemma 2.2]{HL82}). Any such $u$ is called the \emph{potential function} of $\Sm_{Du}$. If, in addition, $u$ satisfies the \emph{special Lagrangian equation} \begin{equation*}
        \sum_{i = 1}^n \arctan(\ld_i(D^2u)) = \vartheta,
    \end{equation*} where $\vartheta \in (-\frac{n \pi}{2}, \frac{n \pi}{2})$ is a constant and $\ld_i(D^2 u)$ denote the eigenvalues of the Hessian matrix $D^2 u$, then $\Sm_{Du}$ is a special Lagrangian submanifold in $\BC^n$, and $\vartheta$ is called the \emph{phase} of $\Sm_{Du}$.
\end{ex}

\begin{ex}[Lawson--Osserman Cone]\label{LOC}
    Take $\SH: \mathbb{S}^3 \rightarrow \mathbb{S}^2$ to be the Hopf map:
\begin{equation*}
    \SH(z^1,z^2) = (2\overline{z}^1 z^2, |z^1|^2 - |z^2|^2) 
\end{equation*}
for $(z^1,z^2) \in \mathbb{C}^2$ with $|z^1|^2 + |z^2|^2 = 1$. After making the identifications $\mathbb{C}^2 \cong \mathbb{R}^4$ and $\mathbb{C} \times \mathbb{R}\cong \mathbb{R}^3$, we define the \emph{Lawson--Osserman cone} (LOC), denoted by $\CC$, to be the graph of $\SL: \BR^4 \rightarrow \BR^3$ given by
\begin{equation}
    \SL(\bx) := \frac{\sqrt{5}}{2}|\bx|\SH\left(\frac{\bx}{|\bx|}\right).
\end{equation} 
It is a 4-dimensional stationary cone in $\mathbb{R}^7$ with an isolated singularity at the origin, with associated integral $4$-current $[[\CC]]$. The LOC was the first example of a singular Lipschitz stationary graph \cite[Theorem 7.1]{LO1}. In \cite[Section IV.3]{HL82}, it was shown that the LOC is calibrated by the \emph{coassociative $4$-form} on $\BR^7$:
\[\vph(\bx) := dx^{1234} - dx^{67} \w (dx^{12}- dx^{34})+ dx^{57} \w (dx^{13} + dx^{24}) - dx^{56} \w (dx^{14} - dx^{23}),\]
where $dx^{i_1 i_2 \ldots i_k}:= dx^{i_1} \wedge dx^{i_2} \wedge \cdots \wedge dx^{i_k}$ for $1 \leq i_1 < i_2 < \cdots < i_k \leq 7$.
\end{ex}

%%%%%%%%%%%%%%%%%%%%%%%%%%%%%%%%
\section{\textbf{Unique continuation from Cauchy data}}

For completeness and ease of reference, we provide an outline of Morgan's unique continuation arguments in \cite[Theorem 7.1]{Mor82} adapted to our setting.

\subsection{Lipschitz stationary solutions}
First, we consider uniqueness in the Cauchy problem for Lipschitz stationary solutions to \eqref{MSS}. The only difference is that we do not assume any interior regularity. Nevertheless, this is no problem due to the interior regularity theory. 
\begin{prop}[Uniqueness in the Cauchy Problem]\label{cauchy2}
   Suppose $u \in C_{\text{loc}}^{0}(\overline{\Om}; \mathbb{R}^{n-k}) \cap C_{\text{loc}}^{0,1}(\Om; \BR^{n-k})$ solves \eqref{MSS} in a Lipschitz domain $\Omega$ (not necessarily bounded) and that
    \begin{equation}
        u = \phi \text{ and } \pl_\nu u = \Phi \text{ on } \Gm \label{cauchy3}
    \end{equation}
    on a relatively open strictly convex $C^{2,\alpha}$ patch $\Gm \subset \pl \Om$, where $\nu$ is the unit outer normal to $\Gm$, $\phi \in C_{\text{loc}}^{2,\alpha}(\Gm; \BR^{n-k}$), and $\Phi \in C_{\text{loc}}^{1,\alpha}(\Gm;\BR^{(n-k)\times k})$. Let $v \in C_{\text{loc}}^{0}(\overline{\Om}; \BR^{n-k}) \cap C_{\text{loc}}^{0,1}(\Om; \BR^{n-k})$ be another solution to \eqref{MSS} in $\Om$ with Cauchy data \eqref{cauchy3}. Then $u \equiv v$ in $\Om$. When $k = n-1$, we can drop the strict convexity of $\Gm$.
\end{prop}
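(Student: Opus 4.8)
The plan is to reduce to the classical situation treated by Morgan: use interior and boundary regularity to turn $u$ and $v$ into genuine $C^{2,\alpha}$ solutions of \eqref{MSS2} near a point of $\Gm$, apply the fundamental theorem of calculus to see that their difference $w:=u-v$ solves a principally diagonal divergence-form elliptic system of the shape \eqref{L1}, invoke unique continuation from Cauchy data to kill $w$ near that point, and finally propagate the vanishing to all of $\Om$ through the regular set by real-analyticity. For the interior: by the partial regularity theorem \cite[Theorem 3.7]{BD2}, a Lipschitz stationary solution of \eqref{MSS} is a classical solution of \eqref{MSS2}---hence, by analytic elliptic regularity, real-analytic---on $\reg(u)=\Om\setminus\sing(u)$, where $\sing(u)$ is relatively closed in $\Om$ with $\mathcal H^{k-1}(\sing(u))=0$; the same holds for $v$. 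Since a closed $\mathcal H^{k-1}$-null subset of $\BR^k$ is nowhere dense and cannot locally disconnect, the set $\Om^{*}:=\reg(u)\cap\reg(v)=\Om\setminus(\sing(u)\cup\sing(v))$ is open, connected, and dense in $\Om$, and $u,v$ are real-analytic on $\Om^{*}$.

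For the boundary: near $\Gm$ the graphs $\Sm_u,\Sm_v$ are stationary integral varifolds whose boundaries over the $C^{2,\alpha}$ patch $\Gm$ are the $C^{2,\alpha}$ graphs of $\phi$. Strict convexity of $\Gm$ supplies a barrier bounding $|Du|$ up to $\Gm$ and forcing the boundary density to equal $1/2$, so Allard's boundary regularity theorem \cite{All75} gives $u,v\in C^{1,\alpha}$ up to $\Gm$. Then \eqref{MSS2} is uniformly elliptic with H\"older continuous coefficients $g^{ij}(Du)$ near $\Gm$, and Schauder estimates for linear elliptic systems, together with $\phi\in C_{\mathrm{loc}}^{2,\alpha}(\Gm)$, bootstrap this to $u,v\in C^{2,\alpha}$ up to $\Gm$; in particular there is a one-sided neighborhood $N\subset\Om^{*}$ of $\Gm$ with $u,v\in C^{2,\alpha}(\overline N)$. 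When $k=n-1$ the same conclusion holds without the convexity hypothesis, using the sharper boundary regularity available in codimension one (cf. \cite{All75,HS1}).

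Now fix $p\in\Gm$; after a $C^{2,\alpha}$ change of variables flattening $\Gm$ near $p$ we may assume $\Gm$ is a piece of $\{x^k=0\}$ and $N$ a half-ball $B_\rho^{+}$, and \eqref{MSS2} becomes another principally diagonal uniformly elliptic system with leading coefficients still Lipschitz. Subtracting the two transformed equations and writing $g^{ij}(Du)-g^{ij}(Dv)=\int_0^1 \tfrac{d}{dt}\,g^{ij}(Dv+tDw)\,dt$ expresses $w$ as a solution of $(Lw)^{\sigma}=\pl_i\!\big(a^{ij}w^{\sigma}_{x^j}\big)+b^{\sigma s}_{\gamma}w^{\gamma}_{x^s}=0$ on $B_\rho^{+}$, where $a^{ij}=g^{ij}(Du)$ is symmetric positive definite with eigenvalues in a fixed interval $[\lambda,\Lambda]$ and $a^{ij}\in C^{0,1}$, and $b\in\CL^{\infty}$ (here it is essential that $v\in C^{2,\alpha}$ up to $\Gm$, since $b$ involves $D^2v$); there is no zeroth-order term and $L$ is principally diagonal of the form \eqref{L1}. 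Because $u$ and $v$ share the Cauchy data \eqref{cauchy3}, $w$ and $\pl_{x^k}w$ vanish on $B_\rho\cap\{x^k=0\}$, so by unique continuation from Cauchy data for principally diagonal operators with Lipschitz leading coefficients (boundary Carleman estimates; see \cite[Remark 2]{A} and \cite{AKS,AE97}) we obtain $w\equiv 0$ on a smaller half-ball. Thus $w$ vanishes on a nonempty open subset of the connected set $\Om^{*}$, where $u$ and $v$ are real-analytic, hence $u\equiv v$ on $\Om^{*}$ by the identity theorem; as $\Om^{*}$ is dense in $\Om$ and $u,v\in C_{\mathrm{loc}}^0(\overline\Om)$, we conclude $u\equiv v$ in $\overline\Om$.

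The delicate step is the boundary regularity: promoting a Lipschitz \emph{stationary} solution of \eqref{MSS}---which is a priori neither area-minimizing nor stable---to a $C^{2,\alpha}$ solution up to the strictly convex patch $\Gm$. This is exactly where strict convexity is indispensable, since it provides the barrier controlling $|Du|$ at $\Gm$ and pins the boundary density to $1/2$ so that boundary regularity for stationary varifolds applies; one must then also check that flattening $\Gm$ preserves both the principally diagonal structure and the Lipschitz bound on the leading coefficients demanded by the Carleman estimate. The only point beyond Morgan's $C^2$ argument---that interior singularities do not obstruct the continuation---is then immediate, since $\mathcal H^{k-1}(\sing(u)\cup\sing(v))=0$.
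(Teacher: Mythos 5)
Your proposal is correct and follows essentially the same route as the paper's proof: interior regularity and the dimension bound on $\sing(u)$ from \cite[Theorem 3.7]{BD2}, boundary regularity via Allard's theorem (with strict convexity of $\Gamma$ securing the density condition), the fundamental theorem of calculus applied to the non-divergence form \eqref{MSS2} to produce a principally diagonal operator of the form \eqref{L1} for $w = u - v$, unique continuation from Cauchy data near $\Gamma$, and propagation through the connected regular set by analyticity. The only notable differences are cosmetic: you invoke a general topological fact (closed $\mathcal{H}^{k-1}$-null sets do not disconnect) where the paper cites \cite[Lemma 6.3]{Mo15}, and your one-line treatment of the $k=n-1$ case leans on \cite{All75,HS1}, whereas the paper handles the scalar MSE separately via De Giorgi--Nash--Moser interior regularity and boundary Schauder estimates, which is the cleaner mechanism in codimension one.
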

\begin{rmk}\label{mark}
    Due to \cite{A,AKS}, for stationary solutions it is natural to loosen \Cref{obs1} to require that $\Gm$ is $C^{1,1}$ and that the solutions are locally $C^{1,1}$ up to $\Gm$.
\end{rmk}

Interior regularity for the MSS follows from the theory for \emph{outer stationary maps} (i.e. solutions to the outer variation system). Assume only that $u: \Om \subset \BR^k \rightarrow \BR^{n-k}$ is outer stationary and $u \in C_{\text{loc}}^1(\Om; \BR^{n-k})$. Fix $\bx_0 \in \Om$ and choose $r > 0$ so that $B_r^k(\bx_0) \subset \subset \Om$. An important fact is that $\CA$ is \emph{strongly rank-one convex} when restricted to Lipschitz functions \cite[Lemma 6.7: the lemma, though stated for $k = 2$, holds for arbitrary
$k$]{Ti21}. Therefore, we can differentiate the equation in $B_r^k(\bx_0)$ to obtain a quasi-linear system with $C^{0}$ coefficients satisfying the \emph{Legendre--Hadamard ellipticity condition} \cite[Definition 3.36]{GM12}. The classical $W^{2, p}$ theory for linear elliptic systems (see e.g. \cite{Mor1, GM12}) shows that $u \in C^{1, \af}(B_r^k(\bx_0); \BR^{n-k})$.\footnote{A simple illustration of the argument for uniformly convex functionals is outlined at the beginning of \cite[Chapter 8]{GM12}, though a similar argument works for strongly rank-one convex functionals.} Morrey's Theorem \cite[Theorem 6.8.1]{Mor1} then gives $u \in C^\om(B_r^k(\bx_0); \BR^{n-k})$ (i.e. the analytic functions). As a result, $C^1$ stationary solutions to \eqref{MSS} are analytic in the interior. In general, this cannot be improved due to the LOC. The outer variation system, \eqref{MSS}, and \eqref{MSS2} are equivalent at regular points. In particular, when $k = n - 1$, the MSS reduces to the MSE, since Lipschitz solutions are always $C^1$, and hence analytic in the interior by the De Giorgi--Nash--Moser Theorem \cite[see Chapter 8]{GT01}.

\begin{proof}[Proof of \Cref{cauchy2}]
When $k = n - 1$, a standard computation using the FTC (see e.g. \cite[Lemma 1.26, p. 37]{CM1}) shows that if $u, v \in C^\om(\Om) \cap C^{0}(\overline{\Om})$ are Lipschitz stationary solutions to \eqref{MSE} in $\Om$, then their difference $w \coloneqq u-v$ solves a uniformly elliptic divergence form scalar equation with analytic coefficients in $\Om$. By boundary Schauder estimates, if $u$ and $v$ have Cauchy data \eqref{cauchy3} then $u,v \in C_{\text{loc}}^{2,\alpha}(U \cap \overline{\Om})$ for some neighborhood $U$ of $\Gm$ in $\BR^n$. Uniqueness in the Cauchy problem for the MSE then follows from the classical uniqueness theory applied to $w$ on $U \cap \Om$ and the SUCP. 

Suppose now that $1 \leq k \leq n - 2$ and that $\Gm$ is strictly convex. By Allard's boundary regularity \cite{All75}, we again have $u, v \in C_{\text{loc}}^{2,\alpha}(U \cap \overline{\Om}; \BR^{n-k})$ for some neighborhood $U$ of $\Gm$ in $\BR^n$ (see \cite[note below Theorem 2.3]{LO1}). Let $w$ be the difference function defined above. While the MSS \eqref{MSS} is a diagonal divergence form system, it is not clear from the divergence form structure that $w$ solves $Lw = 0$ in $\Om \cap U$ with $L$ in \eqref{L1}. Since $u,v \in C_{\text{loc}}^{2,\alpha}(U \cap \overline{\Om}; \BR^{n-k})$, we can pass to the non-divergence form system \eqref{MSS2} near $\Gm$. The FTC, as applied in \cite[p. 1082]{Mo15}, shows that $w$ satisfies $Lw = 0$ in $U \cap \Om$ for $L$ in \eqref{L1} with $c_\gm^\sm \equiv 0$. Therefore, $w \equiv 0$ in $\Om \cap U$ by uniqueness in the Cauchy problem.

Since Lipschitz stationary solutions can exhibit interior singularities when $k \geq 4$ (see e.g. \cite{LO1, B79, FC1, HNY1, XYZ, BD3}), the concern is that $\sing(w)$ may disconnect $\Om$. However, this does not occur. By \cite[Theorem 3.7]{BD2}, $\sing(u)$ and $\sing(v)$ are relatively closed in $\Om$ and have Hausdorff dimension at most $k-4$. Applying this result along with \cite[Lemma 6.3]{Mo15}, we conclude that $\reg(u) \cap \reg(v) \subset \reg(w)$ is open, dense, and connected in $\Om$. In particular, $w \equiv 0$ in $\Om$ by the SUCP and continuity of $w$, completing the proof of \Cref{cauchy2}. 
\end{proof}

\begin{ex}\label{hopfdata}
    The function $\SL$ in \Cref{LOC} is the unique solution to \eqref{cauchy3} in $B_1^4(0) \subset \BR^4$ with $u = \frac{\sqrt{5}}{2} \SH$ and $\pl_\nu u = \frac{\sqrt{5}}{2}\pl_\nu \SH$ on $\pl B_1^4(0)$. It is unknown whether $\SL$ is the unique solution to the Dirichlet problem for \eqref{MSS} on $B_1^4(0)$ with boundary data $\SH$ on $\pl B_1^4(0)$.
\end{ex}

An interesting phenomenon is the discrepancy between the parametric and non-parametric area-minimization problems in high codimension. For example, Mooney and Savin \cite[Theorem 1.2]{MS24} showed that large interior singular sets can form away from the boundary when minimizing area over high codimension Lipschitz graphs. This implies that, in general, the solution to the oriented Plateau problem \emph{does not} seek to be graphical when we impose graphical boundary data (cf. \Cref{graph_calibrated}). We show that, despite this peculiarity, uniqueness in the oriented Plateau problem for calibrated currents can be proved using \Cref{cauchy2}.

\subsection{Area-minimizing currents}
We prove unique continuation from Cauchy data for area-minimizing currents with $C^{3,\alpha}$ boundary after surveying the relevant regularity theory. The proofs of \Cref{UCP1} and \Cref{main} are then a simple application of the regularity theory and \Cref{obs1}. We will assume that $T \in \CI_k(\BR^n)$ is area-minimizing in $U \subset \BR^n$ and $\Gm = \spt \pl T$ unless otherwise stated.

\begin{defn}[Interior Regular Point]\label{regpti}
    A point $p \in \spt T \setminus \Gm$ is called an \emph{interior regular point} for $T$ if there is an $r > 0$ such that $B_r^n(p) \cap \Gm = \emptyset$ and a $k$-dimensional connected oriented embedded $C^1$ submanifold $\Sm \subset B_r^n(p)$ without boundary in $B_r^n(p)$ such that $T \llcorner B_r^n(p) = Q[[\Sm]]$, where $Q \in \BN$ (so $\spt (T \llcorner B_r^n(p)) = \Sm$). The set of interior regular points is denoted by $\reg_i(T)$, and the set of \emph{interior singular points} is defined by $\sing_i(T) := \spt T \setminus (\Gm \cup \reg_i(T))$. 
\end{defn}

\begin{rmk} \label{integerdensity}
    Due to \eqref{dense} and \eqref{cano}, we note that $\theta_T(p) = \Theta_T(p) = Q \in \BN$ when $p \in \reg_i(T)$. Such $Q$ may vary depending on $p$; however, the Constancy theorem (cf. \cite[Chapter 6, Theorem 2.41]{Sim18}) shows that $\Theta_T$ is locally constant on $\reg_i(T)$. Therefore, $\Theta_T$ is constant on each connected component of $\reg_i(T)$. 
\end{rmk}

\begin{rmk} \label{analytic}
    Let $p \in \reg_i(T \llcorner U)$. Then there is an $r > 0$ such that $B_r^n(p) \subset \subset U$, $B_r^n(p) \cap \Gm = \emptyset$, and $T \llcorner B_r^n(p) = Q[[\Sm]]$. In addition, $Q[[\Sm]]$ is stationary when identified with its corresponding varifold $V_\Sm := \underline{v}(\Sm, Q)$. Choosing $r$ smaller if necessary, $\Sm$ can be represented as the graph $\Sm = \Sm_u$ of some analytic stationary solution $u :\Omega \subset \BR^k \rightarrow \BR^{n-k}$ to \eqref{MSS}, where $\Om$ is a bounded, simply connected domain. 
\end{rmk}

When combined with a covering argument, analyticity yields an identity lemma for the set of interior regular points. The requirement of a shared boundary is essential. We leave the details of the proof to the reader.

\newpage
\begin{lem} \label{analyticconti}
    Let $S, S^\prime \in \CI_k(\BR^n)$ with \begin{enumerate}
        \item $\pl S = \pl S^\prime$.
        \item $\reg_i(S)$ and $\reg_i(S^\prime)$ are analytic.
    \end{enumerate} Suppose that $M$ is a connected component of $\reg_i(S)$ such that $M$ and $\reg_i(S^\prime)$ make contact at a point $x_0 \in M \cap \reg_i(S^\prime)$ of infinite order (see \cite[(1.7) in p. 246]{GL86} for a precise definition). Then $M = \reg_i(S^\prime \llcorner M)$ as oriented submanifolds of $\BR^n$.
\end{lem}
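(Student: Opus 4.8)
The plan is to propagate the infinite-order contact at $x_0$ through the connected component $M$ using the unique continuation property for analytic solutions of the minimal surface system, together with the constancy of multiplicity on connected components of the interior regular set. First I would fix $x_0 \in M \cap \reg_i(S')$ and choose a radius $r>0$ with $B_r^n(x_0) \cap \pl S = \emptyset$ such that $S \llcorner B_r^n(x_0) = Q[[\Sm]]$ and $S' \llcorner B_r^n(x_0) = Q'[[\Sm']]$ with $\Sm, \Sm'$ analytic minimal submanifolds through $x_0$, as guaranteed by \Cref{analytic} (applied to both currents, which is legitimate by hypothesis (2)). After an orthogonal change of coordinates we may write $\Sm = \Sm_u$ and $\Sm' = \Sm_{v}$ as graphs of analytic stationary solutions $u,v: \Om \to \BR^{n-k}$ to \eqref{MSS} over a common simply connected domain $\Om \subset \BR^k$ around the projection of $x_0$ (shrinking $r$ if needed so that both sheets are graphical over the same $k$-plane). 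The difference $w := u - v$ then solves $Lw = 0$ for a principally diagonal operator $L$ of the form \eqref{L1} with analytic coefficients — this is exactly the FTC computation behind \Cref{obs1}, valid here because $u,v$ are analytic hence $C^2$. Since $M$ and $\reg_i(S')$ make contact at $x_0$ to infinite order, $w$ vanishes to infinite order at the projection of $x_0$, so by the SUCP for $L$ (Aronszajn--Krzywicki--Szarski) we get $w \equiv 0$ on $\Om$; that is, $\Sm$ and $\Sm'$ coincide near $x_0$ as unoriented submanifolds.

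Next I would upgrade this local coincidence to a global statement on all of $M$ by a standard connectedness argument. Let $N \subset M$ be the set of points $y$ such that, in some ball $B_\rho^n(y)$ disjoint from $\pl S$, the regular sheets of $S$ and $S'$ agree as sets and the orientations and multiplicities agree as well, so that $S \llcorner B_\rho^n(y) = S' \llcorner B_\rho^n(y)$. The previous paragraph shows $N$ is nonempty once we check the orientation and multiplicity match at $x_0$: the multiplicities are locally constant (\Cref{integerdensity}), and since $\spt(S \llcorner B_r(x_0)) = \spt(S' \llcorner B_r(x_0)) = \Sm$, the currents $Q[[\Sm]]$ and $Q'[[\Sm]]$ have the same support with $\pl$ trivial inside $B_r(x_0)$; comparing with hypothesis $\pl S = \pl S'$ and using that $M$ is one connected component, the Constancy theorem forces $Q = Q'$ and the orientations to agree (this needs $M$ to carry a well-defined coherent orientation induced from $\xi_S$, which it does since $M$ is a connected $C^1$ — in fact analytic — submanifold). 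The set $N$ is visibly open. It is also closed in $M$: if $y_j \to y \in M$ with $y_j \in N$, then on a small ball around $y$ both $\Sm$ and $\Sm'$ are analytic graphs over a common domain, the difference $w$ vanishes on an open subset (coming from a ball around some $y_j$), hence vanishes identically by analyticity (or again by SUCP), so $y \in N$. Since $M$ is connected, $N = M$.

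Finally, from $N = M$ one reads off the conclusion. The equality $S \llcorner B = S' \llcorner B$ for all small balls $B$ centered on $M$ shows that $M \subset \reg_i(S')$, that $M$ is (a union of connected components of, hence by connectedness exactly) a connected component of $\reg_i(S')$, and that the orientation of $S'$ restricted to $M$ agrees with that of $S$; equivalently $M = \reg_i(S' \llcorner M)$ as oriented submanifolds of $\BR^n$, which is the assertion. The step I expect to be the genuine obstacle is not the SUCP input — that is black-boxed — but the bookkeeping in the closedness of $N$ and the orientation/multiplicity matching: one must make sure that when two analytic sheets agree on an open set they agree as \emph{currents} (orientations included) on the overlap, and that the ``common graphical domain'' can always be arranged along $M$, which is why analyticity (not merely $C^1$) of the regular sets is used in an essential way. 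This is exactly the covering-argument detail the statement says is left to the reader, so in the writeup I would compress it to the remarks above.
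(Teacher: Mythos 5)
Your overall strategy---local identification of the sheets near $x_0$ via analyticity and SUCP, then propagation along $M$ by an open--and--closed argument---is the natural one and matches the intended ``covering argument.'' The openness step and the local step are fine. The genuine gap is in closedness. You assert that if $y_j \to y \in M$ with $y_j \in N$, then ``on a small ball around $y$ both $\Sm$ and $\Sm'$ are analytic graphs over a common domain.'' The graph $\Sm$ exists because $y \in M \subset \reg_i(S)$, but nothing in the hypotheses guarantees $y \in \reg_i(S')$. The limit point cannot leave $\spt S'$ (it is a limit of points of the closed set $\spt S'$), and it cannot lie in $\spt \pl S'$ (since $\pl S' = \pl S$ and $M$ avoids $\spt \pl S$), but it can perfectly well land in $\sing_i(S')$, in which case $\Sm'$ does not exist near $y$ and the step collapses. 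This is not bookkeeping: as stated the lemma is actually false without some control on $\sing_i(S')$. In $\BR^3$, take $S$ to be the flat unit disk and $S'$ to be the left half of that disk together with the analytic graph of $z = \epsilon x(1 - x^2 - y^2)$ over the right half; then $\pl S' = \pl S$ is the unit circle, $\reg_i(S)$ and $\reg_i(S')$ are analytic, and $M = \reg_i(S)$ (the open disk) makes infinite-order contact with $\reg_i(S')$ at any point of the open left half disk, yet $\reg_i(S' \llcorner M)$ is only the open left half disk. The obstruction is that $\sing_i(S')$ is a $(k-1)$-dimensional fold that disconnects $M \setminus \sing_i(S')$.

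In the paper's applications (\Cref{UCP1} and \Cref{main_2}) the currents involved are area-minimizing, so $\sing_i(S')$ has Hausdorff dimension at most $k-2$ by Almgren and therefore cannot disconnect $M$. The fix is to run your open/closed argument on $M \setminus \sing_i(S')$---where any accumulation point of $N$ lying in $\spt S'$ is automatically in $\reg_i(S')$---conclude that $S'$ agrees with a constant multiple of $[[M]]$ there, and then extend to all of $M$ by density (since $\sing_i(S') \cap M$ is $\CH^k$-null). You should make the missing hypothesis explicit (e.g.\ $\dim_{\CH}\sing_i(S') < k-1$) or restrict the statement to area-minimizers. Two smaller points: the Constancy theorem does not force $Q = Q'$---nothing prevents $S - S'$ from being a nonzero cycle---but this is harmless since the conclusion is about oriented submanifolds, not multiplicities; and the orientation match at $x_0$ has to be read into the notion of infinite-order contact (or supplied by the application), since agreement of unoriented analytic sheets alone leaves a sign ambiguity.
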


Defining boundary regular points is more delicate. In \Cref{regpt} below, we assume that $\Gm \subset \BR^n$ satisfies the hypotheses in \Cref{A1}.
\begin{defn}[Boundary Regular Point]\label{regpt}
    A point $p \in \Gm$ is called a \emph{boundary regular point} for $T$ if there is an $r > 0$ and a $k$-dimensional connected oriented embedded $C^{3, \af}$ submanifold $\Sm \subset B_r^n(p)$ without boundary in $B_r^n(p)$ such that $\spt (T \llcorner B_r^n(p)) \subset \Sm$. The set of boundary regular points is denoted by $\reg_b(T)$, while the set of \emph{boundary singular points} is defined by $\sing_b(T) := \Gm \setminus \reg_b(T)$.
\end{defn}

Fix $p \in \reg_b(T)$ (so $p \in \Gm)$ and let $\Sm$ be as in \Cref{regpt}. Choose $r >0$ so small that $B^n_r(p) \cap \Sm$ is diffeomorphic to a $k$-dimensional disk. Then the Constancy Theorem implies:
\begin{enumerate}
    \item $\Gm \cap B^n_r(p) \subset \Sm$ and divides $\Sm$ into two disjoint $k$-dimensional oriented connected embedded $C^{3, \af}$ submanifolds $\Sm^{\pm}$ with $\pl \Sm^\pm = \pm \Gm$.
    \item There is a natural number $Q \in \BN$ such that 
    \[T \llcorner B^n_r(p) = Q[[\Sm^+]] + (Q - 1)[[\Sm^-]].\]
\end{enumerate}
The number $Q$ is the \emph{multiplicity of $T$ at} $p \in \reg_b(T)$. The \emph{density of $T$ at $p \in \reg_b(T)$} is
\[\Theta_T(p) := Q - \frac{1}{2},\]
and coincides with the definition \eqref{dense} by \cite[Theorem 3.2]{CDHM}. The points $p$ at which $Q = 1$ are called \emph{density $\frac{1}{2}$ points}, or \emph{one-sided points}. The term ``one-sided" comes from the fact that $T \llcorner B^n_r(p) = [[\Sm^+]]$. In other words, $T$ can be locally identified with $\Sm^+$ near $p$. If $Q > 1$, we say that $p$ is a \emph{two-sided point}. See \cite[Example 1.3]{CDHM} for a helpful illustration of one-sided and two-sided boundary regular points. Since we intend to utilize PDE techniques, we need conditions under which a relatively open subset of one-sided regular points exists along $\Gm$. 

The question of the existence of one-sided boundary regular points for suitably regular $\Gm$ dates back to Almgren in his Big Regularity Paper \cite[Section 5.23, p. 835]{Alm1}. A classic result of Hardt and Simon \cite[Theorem 11.1]{HS1} says that if $ k = n - 1$, then $\reg_b(T) = \Gm$. However, without additional hypotheses we cannot be sure that a given point $p \in \reg_b(T)$ is one-sided for general $ 1 \leq k \leq n-2$. The first positive results on the existence of one-sided regular points in the higher codimension case were proved recently by De Lellis, De Philippis, Hirsch, and Massaccesi \cite[Theorem 1.6 \& Corollary 1.10]{CDHM}. We record the most relevant. 
\begin{thm}[{cf. \cite[Theorem 1.6 \& Corollary 1.10]{CDHM}}]\label{thm1}
    Assume $U\subset \BR^n$ and $W \subset \subset U$ are open connected sets. Suppose that $T \in \CI_{k,c}(\BR^n)$ and $\Gm \subset W$ satisfy \Cref{A1}. Then:
    \begin{enumerate}
    \item $\reg_b(T)$ is open and dense in $\Gm$.
    \item Every point in $\reg_b(T)$ is one-sided.
    \item $\reg_i(T)$ is connected.
    \item If $p \in \reg_b(T)$, then there is an $r > 0$ such that $T \llcorner B_r^n(p) = [[\Sm_u]]$ for some stationary solution $u \in C^{3, \af}(\overline{\Om}; \BR^{n-k}) \cap C^\om(\Om; \BR^{n-k})$ to \eqref{MSS}, where $\Om \subset \BR^k$ is a bounded, simply connected, $C^{3, \af}$ domain. 
    \item $\theta_T(p) = \Theta_T(p) = 1$ for all $p \in \reg_i(T)$ and $\BM(T) = \CH^k(\reg_i(T))$.
\end{enumerate}
\end{thm}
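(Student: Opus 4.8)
The plan is to treat \Cref{thm1} as a repackaging of the boundary regularity theory of De Lellis, De Philippis, Hirsch and Massaccesi: I would quote the hard analytic input from \cite{CDHM} and derive everything else formally. Concretely, (1) and (4) together with the \emph{density} of one-sided boundary regular points come from \cite[Theorem 1.6 and Corollary 1.10]{CDHM}; (3) gets reduced to the \emph{indecomposability} of $T$ via Federer's decomposition theorem and the constancy theorem; and (2) and (5) then follow from (3) together with \Cref{integerdensity}.

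First I would record the cheap observations. Openness of $\reg_b(T)$ is immediate from \Cref{regpt}: if $p \in \reg_b(T)$ is witnessed by a submanifold $\Sm \subset B^n_r(p)$, then every $q \in \Gm \cap B^n_{r/2}(p)$ is a boundary regular point, witnessed by $\Sm \cap B^n_{r/2}(p)$. Density of $\reg_b(T)$ in $\Gm$ and the local representation $T\llcorner B^n_r(p) = [[\Sm_u]]$ of (4) --- with $C^{3,\af}$ regularity up to $\Gm$ inherited from $\Gm$, and with $u$ analytic in $\Om$ by the analytic hypoellipticity of the MSS recalled in Section~3.1 --- is \cite[Theorem 1.6]{CDHM}, while \cite[Corollary 1.10]{CDHM} furnishes a dense set of density-$\tfrac12$ (one-sided) boundary regular points; here the hypotheses of \Cref{A1}, namely that $\Gm$ is a connected closed $C^{3,\af}$ submanifold, are exactly what these theorems require.

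Next I would show $T$ is indecomposable and deduce (3). Decompose $T = \sum_j T_j$ into indecomposable integral currents with $\BM_W(T) = \sum_j \BM_W(T_j)$ and $\BM_W(\pl T) = \sum_j \BM_W(\pl T_j)$ for every open $W$; additivity of mass along this decomposition makes each $T_j$ area-minimizing in $U$ (compare $T$ with $T+S$ for competitors $S$ supported where a single $T_j$ is perturbed). Since $\spt \pl T_j \subset \spt \pl T = \Gm$ and $\Gm$ is a connected closed manifold, the constancy theorem forces $\pl T_j = m_j[[\Gm]]$ with $m_j \in \BZ$, and the relations $\sum_j m_j = 1$ and $\sum_j |m_j| = 1$ leave exactly one index, say $j=1$, with $m_1=1$ and all others zero. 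For $j\geq 2$ the current $T_j$ is a compactly supported area-minimizing cycle with $\spt T_j \subset W \subset\subset U$, hence $T_j = 0$ (compare $T_j$ with $T_j + (-T_j)$). Thus $T = T_1$ is indecomposable, and (3) follows: a splitting $\reg_i(T) = M_1 \sqcup M_2$ into nonempty relatively clopen pieces would give $T = T\llcorner\overline{M_1} + T\llcorner\overline{M_2}$ with masses adding, since $\sing_i(T)\cup\Gm$ is $\CH^k$-null. I expect the genuine obstacle to lie here: to turn this into an honest decomposition one must verify that $T\llcorner\overline{M_i}$ has locally finite boundary mass, and the natural route is to use the density of one-sided boundary regular points from the previous step, near which $\spt T$ is a manifold-with-boundary, so the ``edge'' of $\overline{M_i}$ along $\Gm$ is controlled; away from $\Gm$ that edge lies in $\sing_i(T)$, which has Hausdorff dimension at most $k-2$ and is therefore too small to carry $(k-1)$-dimensional boundary mass. (Alternatively, one may simply cite the corresponding statement in \cite{CDHM}.)

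Finally I would deduce (2) and (5). Fixing a one-sided boundary regular point $p_0$, near it $T\llcorner B^n_r(p_0) = [[\Sm^+]]$, so $\Theta_T \equiv 1$ on the portion of $\reg_i(T)$ in that ball; since $\Theta_T$ is locally constant on $\reg_i(T)$ (\Cref{integerdensity}) and $\reg_i(T)$ is connected, $\Theta_T \equiv 1$ and $\theta_T = \Theta_T$ there, which is the first assertion of (5). For (2), at any $p \in \reg_b(T)$ the local form $T\llcorner B^n_r(p) = Q[[\Sm^+]] + (Q-1)[[\Sm^-]]$ exhibits the smooth submanifold $\Sm^+\setminus\Gm$ inside $\reg_i(T)$ with constant density $Q$, forcing $Q=1$, i.e. $p$ is one-sided. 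The mass identity in (5) is then immediate from $\BM(T) = \int_{\spt T}\Theta_T\,d\CH^k$, since $\Gm$ has dimension $k-1$ and $\sing_i(T)$ has Hausdorff dimension at most $k-2$ (at most $k-7$ in codimension one), so both are $\CH^k$-null.
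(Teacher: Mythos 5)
The paper does not prove this theorem: the ``cf.'' prefix signals that the statement is quoted directly from \cite[Theorem~1.6 and Corollary~1.10]{CDHM}, and the paper treats it as a black box in the proofs of \Cref{UCP1} and \Cref{main}. So the paper's ``proof'' is exactly your last parenthetical --- cite \cite{CDHM} --- and your reconstruction, while interesting, is outside what the paper itself does.

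As a reconstruction of \cite[Corollary~1.10]{CDHM} it is broadly in the right spirit, and the decomposition steps you carry out in full (Federer decomposition; each $T_j$ area-minimizing by locality of mass additivity; constancy forcing $\pl T_j = m_j[[\Gm]]$ with $\sum m_j = 1 = \sum|m_j|$; killing compactly supported area-minimizing cycles) are correct and standard, as is the deduction of (2) and (5) from (3) plus a single one-sided point. However, the step you flag as ``the genuine obstacle'' is where the argument actually threatens to become circular rather than just technical. To make $T = T\llcorner\overline{M_1} + T\llcorner\overline{M_2}$ an honest decomposition you need $\pl(T\llcorner\overline{M_i})$ to have locally finite mass. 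Your control of the edge of $\overline{M_i}$ is fine near interior regular points (where the edge lies in $\sing_i(T)$, of dimension $\le k-2$, and a cutoff/slicing argument works) and near one-sided boundary regular points (where $\spt T$ is a $C^{3,\alpha}$ manifold-with-boundary). But it is silent near the boundary singular set $\sing_b(T)$ --- which is closed and nowhere dense in $\Gm$ but not yet known to be $\CH^{k-1}$-null --- and silent near two-sided boundary regular points, where a sheet of $M_1$ could meet a sheet of $M_2$ across $\Gm$ and create boundary mass that does not appear in $\pl T$. You cannot invoke (2) to exclude two-sided points, because (2) is one of the things you are deriving from (3). Closing this gap is precisely the content of the analysis in \cite{CDHM} (via a wrapping argument and a boundary slicing estimate tied to compactness of $\spt T$ and connectedness of $\Gm$); so for the purposes of this paper, your own fallback --- just cite \cite{CDHM} --- is not merely an alternative but is what is actually required, and is what the paper does.
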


Unique continuation from Cauchy data for area-minimizing currents now follows by combining \Cref{thm1}, \Cref{analyticconti}, \Cref{obs1}, and Almgren's interior regularity theorem.
\begin{prop}[Unique Continuation from Cauchy Data]\label{UCP1}
    Assume $U\subset \BR^n$ and $W \subset \subset U$ are open connected sets. Let $T \in \CI_{k,c}(\BR^n)$ and $\Gm \subset W$ be as in \Cref{A1}. Suppose that $T^\prime \in \CI_{k,c}(\BR^n)$ is also area-minimizing in $U$ with $\spt T^\prime \subset W$, and $\pl T^\prime = [[\Gm]]$. If the (oriented) approximate tangent spaces for $\spt T$ and $\spt T^\prime$ agree along a relatively open patch $\Gm^\prime \subset \Gm$, then $T = T^\prime$.
\end{prop}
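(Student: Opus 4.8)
The plan is to localize near a single well-chosen boundary point, apply the smooth unique continuation statement \Cref{obs1} there, and then propagate the resulting local agreement across both currents by interior analyticity. First I would use the boundary regularity theory to produce a point at which $T$ and $T^\prime$ are simultaneously smooth: by \Cref{thm1}(1) applied to $T$ and to $T^\prime$, the sets $\reg_b(T)$ and $\reg_b(T^\prime)$ are each open and dense in $\Gm$, so their intersection is open and dense in $\Gm$ and therefore meets the relatively open patch $\Gm^\prime$. I would fix $p \in \Gm^\prime \cap \reg_b(T) \cap \reg_b(T^\prime)$. By \Cref{thm1}(2) and (4), after shrinking there is $r>0$ with $T \llcorner B_r^n(p) = [[\Sm^+]]$ and $T^\prime \llcorner B_r^n(p) = [[\Sm^{\prime +}]]$, where $\Sm^+$ and $\Sm^{\prime +}$ are connected oriented embedded $C^{3,\af}$ minimal submanifolds with boundary (graphs of stationary solutions of \eqref{MSS}, of class $C^\om$ in the interior), each carried with multiplicity one and each having boundary $\Gm \cap B_r^n(p)$.

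Next I would invoke \Cref{obs1}. The submanifolds $\Sm^+$ and $\Sm^{\prime +}$ are $C^2$ minimal submanifolds sharing the $C^{3,\af}$ boundary portion $\Gm \cap B_r^n(p)$, and by hypothesis they are tangent, as oriented submanifolds, along the relatively open portion $\Gm^\prime \cap B_r^n(p)$ of that common boundary. The key point is that the one-sided $C^{3,\af}$ structure furnished by \cite{CDHM} already provides solutions that are $C^{3,\af}$ up to $\Gm$, so no strict convexity of the boundary patch is needed and \Cref{obs1} applies directly; this is exactly the loosening recorded in \Cref{mark}. (Concretely, choosing coordinates with $T_p \Sm^+ = \BR^k \times \{0\}$ one writes $\Sm^+$ and $\Sm^{\prime +}$ as graphs of $C^{3,\af}$ stationary solutions $u,v$ of \eqref{MSS} over a common $C^{3,\af}$ domain $\Om \subset \BR^k$, with $u = v$ on $\pl \Om$ and $\pl_\nu u = \pl_\nu v$ on the boundary patch, which is precisely the Cauchy data hypothesis of \Cref{cauchy2}.) Thus $\Sm^+ = \Sm^{\prime +}$ as submanifolds in some smaller ball $B_\rho^n(p)$, the orientations agreeing there because they agree along $\Gm^\prime$ and $\Sm^+$ is connected; hence $T \llcorner B_\rho^n(p) = T^\prime \llcorner B_\rho^n(p)$.

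Finally I would globalize. Choosing any point $x_0$ in the interior $\Sm^+ \setminus \Gm$ lying in $B_\rho^n(p)$, we have $x_0 \in \reg_i(T) \cap \reg_i(T^\prime)$ and $T = T^\prime$ in a neighborhood of $x_0$, so $\reg_i(T)$ and $\reg_i(T^\prime)$ make contact at $x_0$ to infinite order. By Almgren's interior regularity theorem both $\reg_i(T)$ and $\reg_i(T^\prime)$ are analytic, so \Cref{analyticconti} applies to the pair; combined with the connectedness of $\reg_i(T)$ and of $\reg_i(T^\prime)$ coming from \Cref{thm1}(3) — applying \Cref{analyticconti} with the roles of $T$ and $T^\prime$ interchanged as well — this yields $\reg_i(T) = \reg_i(T^\prime)$ as oriented submanifolds. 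Since $\theta_T \equiv \theta_{T^\prime} \equiv 1$ there and $\BM(T) = \CH^k(\reg_i(T))$, $\BM(T^\prime) = \CH^k(\reg_i(T^\prime))$ by \Cref{thm1}(5), while $\sing_i(T)$, $\sing_i(T^\prime)$ and $\Gm$ are all $\CH^k$-null, the canonical representations \eqref{cano} of $T$ and $T^\prime$ coincide, i.e. $T = T^\prime$.

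I expect the analytic heart of the proof to be entirely imported — the one-sided $C^{3,\af}$ boundary regularity of \cite{CDHM} (\Cref{thm1}), the Carleman-estimate unique continuation underlying \Cref{obs1} and \Cref{cauchy2}, and Almgren's interior regularity — so the genuine work is bookkeeping. The step demanding the most care is the globalization: one must check that \Cref{analyticconti} really applies, i.e. that there are interior regular points of \emph{both} currents arbitrarily close to $p$ (immediate from the one-sidedness in \Cref{thm1}(2), which lets us realize $T$ near $p$ as the honest smooth submanifold $\Sm^+$ with multiplicity one) and that there is a bona fide infinite-order contact point (trivial here, since the two currents literally coincide near $x_0$), and then that the connectedness of $\reg_i(T)$ and $\reg_i(T^\prime)$ upgrades single-point agreement to agreement everywhere. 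A secondary point of care, handled above via connectedness of $\Sm^+$ and the shared boundary orientation of $[[\Gm]] = \pl T = \pl T^\prime$, is matching the orientations of $T$ and $T^\prime$ near $p$.
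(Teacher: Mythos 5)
Your proposal is correct and follows exactly the route the paper indicates: the paper itself only gives a one-sentence justification for \Cref{UCP1} (``follows by combining \Cref{thm1}, \Cref{analyticconti}, \Cref{obs1}, and Almgren's interior regularity theorem''), and your argument is a faithful, carefully ordered expansion of that sentence. You correctly identify the two nontrivial bookkeeping points — that the one-sided $C^{3,\af}$ boundary regularity from \Cref{thm1} (together with \Cref{mark}) removes the need for any strict-convexity hypothesis when invoking \Cref{obs1}, and that connectedness of both $\reg_i(T)$ and $\reg_i(T^\prime)$ from \Cref{thm1}(3) is what lets the local identification near a single interior regular point propagate via \Cref{analyticconti} to the whole regular set — and you close correctly by appealing to \Cref{thm1}(5) and the $\CH^k$-nullity of the singular sets and of $\Gm$ to pass from $\reg_i(T)=\reg_i(T^\prime)$ (as oriented submanifolds) to equality of currents.
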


We close Section 3 with some remarks on the unbounded and higher multiplicity cases.
\begin{rmk}\label{unbdd}
    Let $U \subset \BR^n$ be an unbounded open connected set. As \Cref{thm1}(1) (see also \cite[Theorem 1.6]{CDHM}) still holds, \Cref{UCP1} can of course be extended to integral currents $T \in \CI_k(\BR^n)$ with possibly unbounded supports and unbounded boundaries $\Gm$ which are area-minimizing in $U$ if we know in advance that \begin{enumerate}
        \item There is an open set of one-sided regular points in $\reg_b(T)$.
        \item $\reg_i(T)$ is connected.
        \item $\Theta_{T} \equiv 1$ on $\reg_i(T)$.
    \end{enumerate} By \Cref{integerdensity}, the third condition follows from the first two.
\end{rmk}

\begin{rmk}
    When $T$ has higher multiplicity on the boundary, i.e. $\pl T = Q [[\Gm]]$ for some $Q \in \BN \setminus \{1\}$, Fleschler and Resende \cite[Theorem C]{FR25} shows that $\reg_b(T)$ is open and dense in $\Gm$. In particular, when $\Gm$ is connected and both $\Gm$ and $T$ are compactly supported, $\reg_b(T)$ contains only one-sided points \cite[Proposition 2.12]{FR25}. Therefore, it seems possible to obtain a similar result as \Cref{UCP1} if we know in advance that \begin{enumerate}
        \item $T$ and $T^\prime$ have the same sheeted structure (see \cite[Definition 1.3]{FR25}) near one-sided points.
        \item $\reg_i(T)$ is connected.
        \item $\Theta_T \equiv Q$ on $\reg_i(T)$.
    \end{enumerate} As in \Cref{unbdd}, the last condition should hold automatically if the second one holds.
\end{rmk}

We want to stress that, as pointed out by Morgan, \Cref{UCP1} depends heavily on the boundary regularity theory and \Cref{obs1}. Using \Cref{mark}, we see that unique continuation from Cauchy data for area-minimizing currents holds whenever we know (1)--(3) in \Cref{unbdd} are satisfied and $\reg_b(T)$ is interpreted in the $C^{1,1}$ sense (i.e. $\Gm^\prime \in C^{1,1}$).

%%%%%%%%%%%%%%%%%%%%%%%%%%%%%%%%
\section{\textbf{Uniqueness of compactly supported smoothly calibrated currents}}

In this section, \Cref{UOPP} is proved as a consequence of \Cref{UCP1}. 

\subsection{Proof of \texorpdfstring{\Cref{UOPP}}{TEXT}}

We start with two general lemmata that are only related to comass one differential forms. Suppose $e_1,\ldots, e_n$ is an orthonormal basis for $\BR^n$. Let $1 \leq k \leq n$. For each $i = 1,\ldots, k$, we define \[e_i \, \lrcorner \, (e_1 \wedge \cdots \wedge e_k) \coloneqq (-1)^{i-1} e_1 \wedge \cdots \w \widehat{e_i} \wedge \cdots \wedge e_k.\] 
Hence, for each $i$ we have
\[
    e_i \w (e_i \, \lrcorner \, (e_1 \wedge \cdots \wedge e_k)) = e_1 \w \cdots \wedge e_k. 
\]
The first important observation is the \emph{first cousin principle} (cf. \cite[p. 161: Exercise 8(a)]{Har1}).

\begin{lem}[First Cousin Principle] \label{FirstCousin}
    Suppose $\varphi \in \CE^k(\BR^n)$ has comass one and let $e_1,\ldots,e_n$ be an orthonormal basis for $\BR^n$. For each $i = 1,\ldots, n$ and $j = 1,\ldots, n-k$, set 
    \begin{align*}
        \xi &:= e_1 \wedge \cdots \wedge e_k \in G(k,\BR^n)\\
        \xi_{ij} &:= e_{k+j} \wedge (e_i \, \lrcorner \, (e_1 \wedge \cdots \wedge e_k)) \in G(k,\BR^n).
    \end{align*}
    If $\iip{\varphi_p}{\xi} = 1$ for some $p \in \BR^n$, then $\iip{\varphi_p}{\xi_{ij}} = 0$ for all $i$ and $j$. 
\end{lem}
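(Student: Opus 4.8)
The plan is to rotate the plane $\xi$ toward $\xi_{ij}$ inside $G(k,\BR^n)$ and then read off the conclusion from the first-order condition at an interior maximum. Fix a pair $(i,j)$; since $e_i \, \lrcorner \, (e_1 \wedge \cdots \wedge e_k) = 0$ unless $1 \le i \le k$, we may assume $1 \le i \le k$ and $1 \le j \le n-k$. I would set $e_i(\theta) := \cos\theta\, e_i + \sin\theta\, e_{k+j}$ and
\[
  \xi(\theta) := e_1 \wedge \cdots \wedge e_{i-1} \wedge e_i(\theta) \wedge e_{i+1} \wedge \cdots \wedge e_k .
\]
First I would check that $\xi(\theta) \in G(k,\BR^n)$ for every $\theta$: the vector $e_i(\theta)$ is a unit vector orthogonal to each of $e_1,\ldots,e_{i-1},e_{i+1},\ldots,e_k$ (none of which involves $e_i$ or $e_{k+j}$), so the $k$ vectors being wedged are orthonormal and $\xi(\theta)$ is a unit simple $k$-vector.

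Next I would expand by multilinearity of the wedge product,
\[
  \xi(\theta) = \cos\theta\,(e_1 \wedge \cdots \wedge e_k) + \sin\theta\,(e_1 \wedge \cdots \wedge e_{i-1} \wedge e_{k+j} \wedge e_{i+1} \wedge \cdots \wedge e_k),
\]
and identify the second summand with $\xi_{ij}$. Indeed, from the defining formula $e_i \, \lrcorner \, (e_1 \wedge \cdots \wedge e_k) = (-1)^{i-1} e_1 \wedge \cdots \wedge \widehat{e_i} \wedge \cdots \wedge e_k$ and moving $e_{k+j}$ leftward past $e_1,\ldots,e_{i-1}$ (another factor $(-1)^{i-1}$), the two signs cancel and one obtains $\xi_{ij} = e_1 \wedge \cdots \wedge e_{i-1} \wedge e_{k+j} \wedge e_{i+1} \wedge \cdots \wedge e_k$. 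Hence $\xi(\theta) = \cos\theta\,\xi + \sin\theta\,\xi_{ij}$. This sign bookkeeping is the only step that needs a bit of care.

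Finally, I would set $f(\theta) := \langle \varphi_p, \xi(\theta) \rangle$. By bilinearity of the pairing and the hypothesis $\langle \varphi_p,\xi\rangle = 1$,
\[
  f(\theta) = \cos\theta + \sin\theta\,\langle \varphi_p, \xi_{ij} \rangle .
\]
Since $\varphi$ has comass one and $\xi(\theta) \in G(k,\BR^n)$, we have $f(\theta) \le \|\varphi\|_p \le 1$ for all $\theta$, while $f(0) = 1$; thus $f$ is a smooth function attaining its maximum at $\theta = 0$, so $f'(0) = 0$. As $f'(\theta) = -\sin\theta + \cos\theta\,\langle \varphi_p, \xi_{ij}\rangle$, this gives $f'(0) = \langle \varphi_p, \xi_{ij}\rangle = 0$. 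Since $(i,j)$ was arbitrary, the conclusion follows. I do not expect a genuine obstacle here; the only things to watch are getting the orientation/sign identity $\xi(\theta) = \cos\theta\,\xi + \sin\theta\,\xi_{ij}$ right and confirming that the rotated frame remains orthonormal so that $\xi(\theta)$ indeed stays in the Grassmannian.
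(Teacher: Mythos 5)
Your proof is correct and uses essentially the same argument as the paper: rotate the plane along the one-parameter family $\xi(\theta) = \cos\theta\,\xi + \sin\theta\,\xi_{ij}$ inside the Grassmannian, invoke comass one to see $\theta = 0$ is a maximum, and apply the first-order condition. The only difference is cosmetic — the paper works with $|\langle\varphi_p,\xi(\theta)\rangle|^2$ rather than $\langle\varphi_p,\xi(\theta)\rangle$ directly, which yields the same conclusion — and your explicit observation that $e_i \,\lrcorner\, (e_1\wedge\cdots\wedge e_k)$ vanishes for $i > k$ is a nice clarification of a point the paper glosses over.
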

\begin{proof}
 For any fixed $i, j$, define the vector field $v(t) \coloneqq \cos t \, e_i + \sin t \, e_{k+j}$ and consider
\begin{equation*}
f(t) \coloneqq |\vph_p(v(t) \w (e_{i} \, \lrcorner \, (e_1 \wedge \cdots \wedge e_k)))|^2 = \cos^2 t + (\iip{\vph_p}{\xi_{ij}})^2 \sin^2 t + 2 \iip{\vph_p}{\xi_{ij}} \cos t \, \sin t,
\end{equation*}
which is smooth in $t$ as $p$ is fixed. Since $\vph$ has comass one, $f$ has a local maximum at $t = 0$. It follows that $0 = f'(0) = 2 \iip{\vph_p}{\xi_{ij}}$. As $i, j$ are arbitrary, the proof is complete.
\end{proof}

We call the $\xi_{ij}$ the \emph{first cousins of} $\xi$. From \Cref{FirstCousin}, we obtain our key lemma.

\begin{lem}[Intersecting Planes Lemma]\label{IPL}
 Let $\varphi \in \CE^k(\BR^n)$ have comass one, fix $p \in \BR^n$, and suppose $\{e_1, \ldots, e_{k-1}\} \subset \BR^n$ is an orthonormal set. If 
 \[\eta := e_1 \wedge \cdots \wedge e_{k-1} \in G(k-1,\BR^n)\] 
 and $v \in \mathbb{S}^{n-1} \cap \eta^\perp$ makes 
 \begin{equation*}
     \eta_v := e_1 \wedge \cdots \wedge e_{k-1} \wedge v \in \mathcal{G}_p(\varphi), 
 \end{equation*}
 then $v$ is unique. In particular, if for some $p \in \BR^n$ the oriented $k$-planes $\xi_1,\xi_2 \in \mathcal{G}_p(\varphi)$ satisfy $\xi_1 \cap \xi_2 \in G(k-1,\BR^n)$, then $\xi_1 = \xi_2$.  
\end{lem}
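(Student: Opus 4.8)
The statement has two parts; the second follows from the first, so I would focus on the first. I want to show: if $\eta = e_1 \wedge \cdots \wedge e_{k-1}$ is a fixed unit simple $(k-1)$-vector and $v \in \mathbb{S}^{n-1} \cap \eta^\perp$ is such that $\eta_v := e_1 \wedge \cdots \wedge e_{k-1} \wedge v \in \mathcal{G}_p(\varphi)$, then $v$ is uniquely determined. Suppose $v, w \in \mathbb{S}^{n-1} \cap \eta^\perp$ both make $\eta_v, \eta_w \in \mathcal{G}_p(\varphi)$. If $v = \pm w$ we are quickly done (the orientation rules out $v = -w$ since $\iip{\varphi_p}{\eta_w} = -\iip{\varphi_p}{\eta_v} = -1 \neq 1$), so assume $v, w$ are linearly independent; then $\spn\{v, w\}$ is a $2$-plane inside $\eta^\perp$.

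I would complete $\{e_1, \dots, e_{k-1}\}$ together with $v$ to an orthonormal frame: set $e_k := v$, pick $e_{k+1} \in \mathbb{S}^{n-1} \cap \eta^\perp \cap v^\perp$ lying in $\spn\{v, w\}$ (possible since $w \in \eta^\perp$ and $w \notin \spn\{v\}$), and extend $\{e_1, \dots, e_{k+1}\}$ to an orthonormal basis $e_1, \dots, e_n$ of $\BR^n$. With $\xi := e_1 \wedge \cdots \wedge e_k = \eta_v$, we have $\iip{\varphi_p}{\xi} = 1$, so the First Cousin Principle (\Cref{FirstCousin}) applies with this frame: in particular, taking $i = k$ and $j = 1$,
\[
\xi_{k1} = e_{k+1} \wedge (e_k \,\lrcorner\, (e_1 \wedge \cdots \wedge e_k)) = e_{k+1} \wedge (-1)^{k-1} e_1 \wedge \cdots \wedge e_{k-1} = \pm\, e_1 \wedge \cdots \wedge e_{k-1} \wedge e_{k+1},
\]
so that $\iip{\varphi_p}{\,e_1 \wedge \cdots \wedge e_{k-1} \wedge e_{k+1}} = 0$. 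Now write $w = \cos\theta\, e_k + \sin\theta\, e_{k+1}$ for some $\theta$ with $\sin\theta \neq 0$ (as $w \neq \pm v$). By multilinearity,
\[
1 = \iip{\varphi_p}{\eta_w} = \cos\theta\, \iip{\varphi_p}{\eta_v} + \sin\theta\, \iip{\varphi_p}{\,e_1 \wedge \cdots \wedge e_{k-1} \wedge e_{k+1}} = \cos\theta.
\]
But $\cos\theta = 1$ forces $\sin\theta = 0$, a contradiction. Hence $v = w$, proving uniqueness.

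For the second, "in particular," assertion: if $\xi_1, \xi_2 \in \mathcal{G}_p(\varphi)$ and $\xi_1 \cap \xi_2 \in G(k-1,\BR^n)$, meaning the underlying oriented $k$-planes share a common oriented $(k-1)$-plane, choose an orthonormal oriented basis $e_1, \dots, e_{k-1}$ for that common $(k-1)$-plane, so $\eta := e_1 \wedge \cdots \wedge e_{k-1}$ is the corresponding simple $(k-1)$-vector. Then $\xi_1 = \eta \wedge v_1$ and $\xi_2 = \eta \wedge v_2$ for unit vectors $v_1, v_2 \in \mathbb{S}^{n-1} \cap \eta^\perp$ (the orientations match because $\xi_1, \xi_2$ induce the same orientation on the common face, which one can arrange by the sign conventions), and both lie in $\mathcal{G}_p(\varphi)$. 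By the first part, $v_1 = v_2$, so $\xi_1 = \xi_2$ as oriented $k$-planes.

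**Main obstacle.** The essential content is just the First Cousin Principle plus bookkeeping; the only place to be careful is the orientation/sign matching — ensuring that when two calibrated oriented $k$-planes meet in a $(k-1)$-plane, the shared $(k-1)$-vector $\eta$ can be taken compatibly with both orientations (otherwise one would pick up a sign and get $\iip{\varphi_p}{\eta_w} = -1$ instead), and tracking the $(-1)^{k-1}$ in the contraction $e_k \,\lrcorner\,(e_1 \wedge \cdots \wedge e_k)$, which is harmless since it only affects an overall sign of a quantity known to be zero. I expect no real difficulty beyond this.
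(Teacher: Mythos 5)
Your proof is correct and follows essentially the same approach as the paper's: both arguments reduce to the First Cousin Principle by extending $\{e_1,\ldots,e_{k-1},v\}$ to an orthonormal frame and decomposing the second unit vector into its component along $v$ and its component orthogonal to $\eta_v$. The only (cosmetic) difference is that you set $e_{k+1}$ to lie in $\spn\{v,w\}$, so a single first cousin $\xi_{k1}$ suffices, whereas the paper writes the orthogonal component as a general combination $\sum_j a_j e_{k+j}$ and invokes all the cousins $\xi_{kj}$; the two computations are identical in substance. Your treatment of the signs in the second assertion (choosing $\eta$ compatibly so that both $\xi_i=\eta\wedge v_i$ with the same $\eta$) is the right way to handle the orientation bookkeeping.
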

\begin{proof}
    Let $\eta$ be as in the statement of the lemma. Suppose that $v_1, v_2 \in \mathbb{S}^{n-1} \cap \eta^\perp$ have been chosen so that $\eta_{v_1}, \eta_{v_2} \in \mathcal{G}_p(\varphi)$. Write $v_2 := c_1 v_1 + c_2 v_1^\perp$, where $v_1^\perp \in \eta_{v_1}^\perp$ and $c_i \in \mathbb{R}$ for each $i = 1, 2$. Set $e_k := v_1$ and choose $e_{k+1},\ldots, e_{n}$ so that $\{e_1,\ldots, e_n\}$ is an orthonormal basis for $\BR^n$. Then 
    \[v_1^\perp = \sum_{j = 1}^{n-k} a_{j} e_{k+j} \] 
    for some $\{a_j \in \BR\}_{j=1}^{n-k}$.
    Since $\eta_{v_1}, \eta_{v_2} \in \mathcal{G}_p(\varphi)$, \Cref{FirstCousin} shows
    \begin{equation*}
    1 = \iip{\vph_p}{\eta_{v_2}} = c_1 + c_2 \sum_{j = 1}^{n-k} a_{j} \iip{\vph_p}{\xi_{kj}} = c_1,
\end{equation*} where $\xi_{kj}$ are first cousins of $e_1 \w \cdots \w e_k = \eta_{v_1}$. Since $v_2 \in \mathbb{S}^{n-1}$, we see that $v_2 = v_1$. This proves the first statement, and the second is immediate from the first.
\end{proof}

Given a smoothly calibrated integral current with compact support, all the compactly supported area-minimizing integral currents which have the same boundary must also be calibrated by the same form. This is proved using \Cref{closed=exact} and \Cref{calibrated}.

\begin{lem} \label{areaminimpliescal}
   Assume $U\subset \BR^n$ and $W \subset \subset U$ are open sets (not necessarily connected). Let $\vph \in \CE^k(\BR^n)$ be a calibration. Suppose that $T \in \CI_{k, c}(\BR^n)$ is calibrated by $\vph$ in $U$ with $\spt T \subset W$. If $T^\prime \in \CI_{k, c}(\BR^n)$ is area-minimizing in $U$ with $\pl T^\prime = \pl T$ and $\spt T^\prime \subset W$, then $T^\prime$ is also calibrated by $\vph$ in $U$.
\end{lem}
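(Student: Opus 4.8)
The plan is to show that $T'$ achieves its mass when paired against $\varphi$, and then invoke \Cref{calibrated}. First I would observe that $T - T'$ is a compactly supported integral $k$-current with $\partial(T - T') = \partial T - \partial T' = 0$, so by \Cref{closed=exact} there is $S \in \CI_{k+1, c}(\BR^n)$ with $T - T' = \partial S$. Since $T$ and $T'$ have compact support, choose an open set $W' \subset\subset U$ with $\spt T \cup \spt T' \cup \spt S \subset W'$; also pick $W''$ with $W' \subset\subset W'' \subset\subset U$. Because $\varphi$ is closed and $\partial S = T - T'$ with $\spt S \subset W'$, Stokes' theorem (valid for the pairing of a compactly supported normal current with a smooth closed form) gives
\[
  T(\varphi) - T'(\varphi) = (\partial S)(\varphi) = S(d\varphi) = 0,
\]
so $T(\varphi) = T'(\varphi)$.

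Next I would bound both sides using \Cref{calibrated}. Since $\spt T \subset W \subset W''$, we have $T \llcorner W'' = T$, so $T(\varphi) = (T \llcorner W'')(\varphi) = \BM_{W''}(T)$, the last equality because $T$ is calibrated by $\varphi$ in $U \supset W''$ (the "if" direction of \Cref{calibrated}). On the other hand, since $\spt T' \subset W \subset W''$ we have $T' \llcorner W'' = T'$, and \Cref{calibrated} gives $T'(\varphi) = (T' \llcorner W'')(\varphi) \leq \BM_{W''}(T')$. Combining with the equality $T(\varphi) = T'(\varphi)$ from the previous step yields $\BM_{W''}(T) \leq \BM_{W''}(T')$. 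But $T'$ is area-minimizing in $U$ and $T - T' = \partial S$ is a cycle supported in $W' \subset\subset U$, so by \Cref{areamin} applied on $W''$ (with competitor $S' := T - T' = \partial S$, which has $\partial S' = 0$ and $\spt S' \subset W''$) we get $\BM_{W''}(T') \leq \BM_{W''}(T' + (T - T')) = \BM_{W''}(T)$. Hence $\BM_{W''}(T') = \BM_{W''}(T) = T(\varphi) = T'(\varphi)$, i.e. $(T' \llcorner W'')(\varphi) = \BM_{W''}(T')$.

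Finally, the equality case of \Cref{calibrated} then forces $T'$ to be calibrated by $\varphi$ in $W''$, i.e. $\xi_{T'}(p) \in \CG_p(\varphi)$ for $\|T'\|$-a.e. $p \in W''$. Since $\spt T' \subset W \subset W''$, this says $\xi_{T'}(p) \in \CG_p(\varphi)$ for $\|T'\|$-a.e. $p \in U$ (the condition is vacuous off $\spt T'$), which is exactly the assertion that $T'$ is calibrated by $\varphi$ in $U$. The main obstacle I anticipate is purely bookkeeping: making sure the nested open sets $W', W'', U$ are chosen so that all three currents $T$, $T'$, $S$ are compactly contained, so that $\BM_{W''}$ faithfully records the full mass of $T$ and $T'$ and so that the area-minimizing comparison in \Cref{areamin} legitimately applies with the cycle $\partial S$ supported inside $W''$; no hard estimate is needed, only a careful invocation of \Cref{closed=exact}, \Cref{calibrated}, and the definition of area-minimizing.
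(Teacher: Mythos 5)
Your proposal is correct and follows essentially the same route as the paper: use \Cref{closed=exact} to write $T - T' = \partial S$, pair against $\varphi$ via Stokes to get $T(\varphi) = T'(\varphi)$, chain this with the area-minimizing inequality and \Cref{calibrated} to force $\BM_{W''}(T') = (T' \llcorner W'')(\varphi)$, and conclude by the equality case of \Cref{calibrated}. One small flaw: you ask to choose $W' \subset\subset U$ containing $\spt S$, but \Cref{closed=exact} gives no control on where $\spt S$ lives---it need not even be contained in $U$---so that step may be impossible when $U \neq \BR^n$. Fortunately you never actually use $\spt S \subset W'$: the Stokes identity $(\partial S)(\varphi) = S(d\varphi) = 0$ holds for any compactly supported $S$ because $\varphi$ is closed on all of $\BR^n$, and the area-minimizing comparison only needs the \emph{cycle} $T - T'$ (not $S$) to be supported inside $W''$, which follows from $\spt T \cup \spt T' \subset W$. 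Dropping $\spt S$ from the definition of $W'$ (or simply working directly with the given $W$, as the paper does) repairs this.
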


\begin{proof}
    Note that $T - T^\prime$ has compact support in $W \subset \subset U$ and $\pl (T - T^\prime) = 0$. Since $T^\prime$ is area-minimizing in $U$ and $T$ is calibrated by $\vph$, \Cref{calibrated} leads to \begin{equation} \label{eq_1}
        (T^\prime \llcorner W)(\vph) \leq \BM_W(T^\prime) \leq \BM_W(T^\prime+ (T-T^\prime) ) = \BM_W(T) = (T \llcorner W)(\vph) = T(\vph).
    \end{equation} Next, by \Cref{closed=exact}, there exists $S \in \CI_{k+1,c}(\BR^n)$ such that $\pl S = T-T^\prime$. Then \begin{equation} \label{eq_2}
        T(\vph) = (T^\prime + \pl S)(\vph) = T^\prime(\vph) + S(d \vph) = (T^\prime \llcorner W)(\vph)
    \end{equation} since $d \vph = 0$. Combining \eqref{eq_1} and \eqref{eq_2}, we see that $\BM_W(T^\prime) = (T^\prime \llcorner W)(\vph)$, which implies that $T^\prime$ is calibrated by $\vph$ in $W$ by \Cref{calibrated}. Finally, since $\|T^\prime\|(U \setminus W) = 0$, we conclude that $T^\prime$ is calibrated by $\vph$ in $U$.
\end{proof}

The main theorem now follows easily.

\begin{prop} \label{main}
    Assume $U\subset \BR^n$ and $W \subset \subset U$ are open connected sets. Suppose that $T \in \CI_{k,c}(\BR^n)$ and $\Gm \subset W$ are as in \Cref{A1}. Let $\vph \in \CE^k(\BR^n)$ be a calibration. Assume that $T$ is calibrated by $\vph$ in $U$. If $T^\prime \in \CI_{k, c}(\BR^n)$ is area-minimizing in $U$ with $\pl T^\prime = [[\Gm]]$ and $\spt T^\prime \subset W$, then $T^\prime = T$.
\end{prop}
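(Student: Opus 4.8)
The plan is to combine the three ingredients assembled so far: Lemma \ref{areaminimpliescal} (the competitor $T'$ is calibrated by the same $\vph$), the boundary regularity theory (Theorem \ref{thm1}), the rigidity lemmas for comass-one forms (the first cousin principle and the Intersecting Planes Lemma \ref{IPL}), and finally unique continuation from Cauchy data (Proposition \ref{UCP1}). The whole argument reduces to verifying a single geometric fact: that the oriented approximate tangent planes of $\spt T$ and $\spt T'$ coincide along some relatively open patch $\Gm' \subset \Gm$, after which Proposition \ref{UCP1} closes the argument immediately.

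First I would invoke Lemma \ref{areaminimpliescal}: since $T$ is calibrated by $\vph$ in $U$, $\spt T \subset W$, and $T'$ is area-minimizing in $U$ with $\pl T' = \pl T = [[\Gm]]$ and $\spt T' \subset W$, the lemma gives that $T'$ is also calibrated by $\vph$ in $U$. Next, apply Theorem \ref{thm1} to \emph{both} $T$ and $T'$: the set $\reg_b(T)$ is open and dense in $\Gm$, every boundary regular point is one-sided, and likewise for $\reg_b(T')$. Since $\Gm$ is connected and $\reg_b(T), \reg_b(T')$ are open and dense in $\Gm$, their intersection $\reg_b(T) \cap \reg_b(T')$ is a nonempty relatively open subset $\Gm' \subset \Gm$. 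Fix $p \in \Gm'$. Near $p$, both $T$ and $T'$ are (by one-sidedness and Theorem \ref{thm1}(4)) embedded $C^{3,\alpha}$ minimal submanifolds-with-boundary sharing the boundary portion $\Gm \cap B^n_r(p)$, so each has a well-defined oriented tangent $k$-plane at $p$, say $\xi_1$ for $\spt T$ and $\xi_2$ for $\spt T'$.

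The key step is to show $\xi_1 = \xi_2$. Both submanifolds have the same boundary $\Gm$ through $p$ with the same orientation, so their tangent planes at $p$ both contain the $(k-1)$-plane $\eta := T_p\Gm \in G(k-1,\BR^n)$, with the induced orientation matching that of $[[\Gm]]$ in each case. (This is where one-sidedness matters: $T'\llcorner B^n_r(p) = [[\Sm'^+]]$ with $\pl \Sm'^+ = \Gm$, so the boundary orientation is genuinely the induced one, not $\pm$ with ambiguous sign.) Because $T$ and $T'$ are calibrated by $\vph$, we have $\xi_1 = \eta_{v_1} \in \CG_p(\vph)$ and $\xi_2 = \eta_{v_2} \in \CG_p(\vph)$ for unit vectors $v_1, v_2 \in \BS^{n-1} \cap \eta^\perp$. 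Lemma \ref{IPL} now applies verbatim: since $\eta_{v_1}, \eta_{v_2}$ lie in the contact set $\CG_p(\vph)$ and share the common $(k-1)$-plane $\eta$, we conclude $v_1 = v_2$, hence $\xi_1 = \xi_2$. This holds at every $p \in \Gm'$, so the oriented approximate tangent spaces of $\spt T$ and $\spt T'$ agree along the relatively open patch $\Gm'$.

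Finally, apply Proposition \ref{UCP1} with this $\Gm'$: since $T, T' \in \CI_{k,c}(\BR^n)$ are both area-minimizing in $U$ with $\spt T, \spt T' \subset W$, $\pl T = \pl T' = [[\Gm]]$, and their oriented approximate tangent spaces agree along $\Gm'$, we get $T = T'$. The main obstacle I anticipate is the bookkeeping around orientations in the tangent-plane comparison: one must check that the calibration condition $\xi_{T}(p) \in \CG_p(\vph)$ holding $\|T\|$-a.e.\ can be upgraded to hold at the specific regular point $p$ (this follows from continuity of $\xi_T$ on $\reg_b(T)$ and upper semicontinuity/closedness of $\CG_p(\vph)$ in $(p,\xi)$, using $d\vph = 0$ and smoothness of $\vph$), and that both submanifolds induce the \emph{same} orientation on $\eta = T_p\Gm$ — which is forced because each restricts on $B^n_r(p)$ to a current with boundary exactly $[[\Gm]]$, not $-[[\Gm]]$. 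Everything else is a direct citation of the lemmas already in hand.
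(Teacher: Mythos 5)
Your proof is correct and follows the same approach as the paper: pass the calibration to $T'$ via Lemma~\ref{areaminimpliescal}, find a common relatively open patch of one-sided boundary regular points via Theorem~\ref{thm1}, match the oriented tangent $k$-planes along that patch via the Intersecting Planes Lemma~\ref{IPL} using the shared $(k-1)$-plane $T_q\Gm$, and conclude by unique continuation (Proposition~\ref{UCP1}). One minor imprecision: in your justification that the calibration condition extends from $\|T\|$-a.e.\ to the boundary regular points, the hypothesis $d\vph=0$ is not needed there---only the continuity of the pairing $m\mapsto\iip{\vph_m}{\xi_T(m)}$ on $\overline{\Sm^+}$ (coming from smoothness of $\vph$ and the $C^{3,\alpha}$ regularity of $\Sm^+$ up to its boundary) together with the fact that the pairing equals $1$ $\CH^k$-a.e.\ on the interior.
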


\begin{proof}
    According to \Cref{areaminimpliescal}, $T^\prime$ is also calibrated by $\vph$ in $U$. By \Cref{thm1}(1) and (2), there exist $p \in \Gm$ and $r > 0$ such that $\Gm_r := B^n_r(p) \cap \Gm \subset \reg_b(T) \cap \reg_b(T^\prime)$ consists of one-sided regular boundary points. Let $\Sm$ and $\Xi$ be $k$-dimensional smooth submanifolds of $\BR^n$ satisfying $\spt T \cap B^n_r(p) = \Sm^+$ and $\spt T^\prime \cap B^n_r(p) = \Xi^+$, which can be done due to the discussion following \Cref{regpt}. Then for all $q \in \Gm_r$, the approximate tangent spaces for $\spt T$ and $\spt T^\prime$ at $q$ match $T_q \Sm^+$ and $T_q\Xi^+$, respectively. Moreover, since $\vph \in \CE^k(\BR^n)$ and the orientation $\xi_{\Sm^+}$ is $C^2$ in $\Sm^+$, the function $m \mapsto \iip{\vph_m}{\xi_T(m)}$ is continuous in $\Sm^+$. Therefore, $T_q \Sm^+ \in \CG(\vph)$ for all $q \in \Gm_r$. The same reasoning applied to $\Xi^+$ shows that $T_q \Xi^+ \in \CG(\vph)$ for all $q \in \Gm_r$ as well. Since $\pl \Sm^+ = \pl \Xi^+ = \Gm_r$, the intersection $T_q \Sm^+ \cap T_q \Xi^+$ must contain the $(k-1)$-dimensional subspace $T_q\Gm_r$ for each $q \in \Gm_r$. Then \Cref{IPL} shows that $T_q \Sm^+ = T_q \Xi^+$ for all $q \in \Gm_r$. Applying \Cref{UCP1}, we deduce that $T \llcorner U = T^\prime \llcorner U$. Since $\spt T \subset U$ and $\spt T^\prime \subset U$, we conclude that $T = T^\prime$.
\end{proof}

\begin{rmk}\label{final}
    In the special case $U = \BR^n$, we simply choose $W \subset \subset \BR^n$ satisfying $\spt T \cup \spt T^\prime \subset W$ to obtain \Cref{UOPP}.
\end{rmk}

As a consequence of \Cref{main} and \Cref{final}, we have proved uniqueness in the oriented Plateau problem for many important classes of area-minimizing submanifolds (\Cref{graph_calibrated}--\Cref{LOC}). We conclude with the sharpness of our hypotheses and extensions.

\subsection{Sharpness of hypotheses} We discuss extensions of our main theorems, as well as counterexamples to uniqueness when the hypotheses in \Cref{A1} are relaxed. 

\subsubsection{Connectedness of the boundary}
 Suppose $\Gm$ satisfies the hypotheses in \Cref{A1}, except that it is disconnected. Let $T \in \CI_{k,c}(\BR^n)$ be area-minimizing with $\pl T = [[\Gm]]$. The only potential issue is that of regularity. Indeed, for our argument to work, we must be able to guarantee that every connected component of $\reg_i(T)$ meets $\reg_b(T)$ in a relatively open set of one-sided boundary regular points for $T$. Then, since \Cref{areaminimpliescal} holds for disconnected boundaries, the first cousin principle (\Cref{FirstCousin}) and the intersecting planes lemma (\Cref{IPL}) force agreement of the approximate tangent planes along relatively open portions of $\Gm$ consisting of one-sided regular points that meet each connected component of $\reg_i(T)$. Hence, \Cref{obs1} applies and unique continuation from Cauchy data is propagated throughout the entirety of $\reg_i(T)$ by \Cref{analyticconti}.

\subsubsection{Compact support}
The ``compact support assumption" is crucial. The simplest counterexample is half-planes which have boundary equal to the axis but are at a non-zero angle. For example, consider $\Sm \coloneqq \{(x, 0, z) : x \geq 0, z \in \BR\} \subset \BR^3$ and $\Xi \coloneqq \{(0, y, z) : y \geq 0, z \in \BR\} \subset \BR^3$. Note that $[[\Sm]]$ is calibrated by $dx \w dz$ in $\BR^3$ and $[[\Xi]]$ is calibrated by $dy \w dz$ in $\BR^3$, so they are both globally area-minimizing and share the same boundary $[[z\text{-axis}]]$. Though $\Sm$ and $\Xi$ are diffeomorphic as smooth manifolds, $[[\Sm]]$ and $[[\Xi]]$ are certainly not the same as integral currents. 
    
In $\BR^3$, the half-plane, half of the helicoid, and half of the Enneper surface, each positioned so that they share the same boundary line, serve as another counterexample since they are all globally area-minimizing (see \cite{Whi96}, \cite{Per07}, or \cite[Example 1.1]{EW22}). However, they are neither equal as integral currents nor diffeomorphic to one another as smooth manifolds with boundary. Notice that, in each case, it is \Cref{areaminimpliescal} that fails. The key point is that the compact support assumption restricts the angle that the tangent planes at the boundary of a calibrated integral current can form with the support of the boundary current, thereby ensuring uniqueness if we have sufficient boundary regularity.

\subsubsection{Integral flat chains}

As \Cref{FirstCousin} and \Cref{IPL} are pointwise statements, they hold for any comass one $k$-form defined at $p \in\BR^n$, without requiring any regularity assumptions. On the other hand, the proof of \Cref{main} only uses the fact that the calibration is a $k$-form that is continuous near the boundary. Therefore, it is natural to expect that \Cref{main} may still hold for non-smooth calibrations.

Using the language of \emph{flat $k$-chains}, one can weakly define the notion of calibrations. Loosely, flat $k$-chains can be viewed as a generalization of compactly supported normal $k$-currents (cf. \cite[Section 4.1.12]{Fed69}). \emph{Flat $k$-cochains} are bounded real-valued linear functionals on flat $k$-chains (see \cite[Section 4.1.19]{Fed69}). They can be regarded as $\CH^n$-measurable $k$-forms. A \emph{calibrating flat $k$-cochain} is a (weakly) closed flat $k$-cochain that has comass one. 

The definitions above, together with the fact that the homology of flat chains is isomorphic to singular homology (see \cite[Section 4.4.5--4.4.6]{Fed69}), shows that \Cref{calibratedimpliesmin} and \Cref{areaminimpliescal} generalize naturally to the setting of flat chains. More precisely, calibrated flat chains are area-minimizing among all flat chains with the same boundary. Furthermore, if $T$ is a flat $k$-chain calibrated by a flat $k$-cochain $\af$ and $T^\prime$ is an area-minimizing flat $k$-chain such that $\pl T^\prime = \pl T$, then $T^\prime$ must also be calibrated by $\af$. On the other hand, an integral flat $k$-chain whose mass and boundary mass are finite is, in fact, an integral $k$-current with compact support (see \cite[Section 4.2.16]{Fed69}). Hence, the boundary regularity theory developed by De Lellis, De Philippis, Hirsch, and Massaccesi in \cite{CDHM} applies to such integral flat $k$-chains. We can therefore generalize \Cref{UOPP} to integral flat chains. 

\begin{thm}[Uniqueness for Calibrated Integral Flat Chains] \label{flatchain}
    Suppose that $\Gm \subset \BR^n$ is as in \Cref{A1} and let $\af$ be a calibrating flat $k$-cochain of $\BR^n$. Further suppose that there is an open set $\mathscr{O} \subset \BR^n$ with $\mathscr{O} \cap \Gm \neq \emptyset$ such that $\alpha \lvert_{\mathscr{O}}$ can be identified with a $k$-form $\vph_\alpha \in C^0(\mathscr{O}; \Lambda^k(\BR^n))$. Assume that $T$ is an integral flat $k$-chain in $\BR^n$ of finite mass such that $\pl T = [[\Gm]]$, and that $T$ is calibrated by $\af$ in $\BR^n$. If $T^\prime$ is another globally area-minimizing integral flat $k$-chain in $\BR^n$ with $\pl T^\prime = [[\Gm]]$, then $T^\prime = T$.
\end{thm}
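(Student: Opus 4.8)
The plan is to reduce Theorem~\ref{flatchain} to the already-established machinery for integral currents, by exploiting two facts: (i) an integral flat $k$-chain of finite mass with finite boundary mass is an integral $k$-current with compact support, so $T$ and $T'$ fall under Assumption~\ref{A1} once we check area-minimality in the current-theoretic sense; and (ii) the proof of \Cref{main} never used smoothness of $\vph$ away from the boundary --- only that the calibrating object is a comass-one, weakly closed $k$-cochain and that it is represented by a \emph{continuous} $k$-form on a neighborhood of some boundary point. So the first step is to invoke \cite[Section 4.2.16]{Fed69} to see $T \in \CI_{k,c}(\BR^n)$ with $\pl T = [[\Gm]]$, and similarly, after noting that a globally area-minimizing integral flat chain automatically has finite mass and finite boundary mass (its boundary is $[[\Gm]]$, which has finite mass, and it is competitive with $T$), to conclude $T' \in \CI_{k,c}(\BR^n)$ as well.

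Next I would establish the flat-chain analogue of \Cref{areaminimpliescal}: since the homology of flat chains agrees with singular homology (\cite[Section 4.4.5--4.4.6]{Fed69}), the chain $T - T'$ is compactly supported with zero boundary, hence equals $\pl S$ for a flat $(k+1)$-chain $S$; then the comass-one, weakly-closed property of $\af$ gives, exactly as in \eqref{eq_1}--\eqref{eq_2}, that $T'$ is calibrated by $\af$ in $\BR^n$. In particular $T'$ is globally area-minimizing as an integral current (by \Cref{calibratedimpliesmin}, or directly from the calibration inequality), and $\spt T \cup \spt T' \subset W$ for some bounded open $W$. Now I restrict attention to the open set $\mathscr{O}$: on $\mathscr{O}$ the cochain $\af$ is represented by the continuous $k$-form $\vph_\af$, so for $\|T\|$-a.e.\ and $\|T'\|$-a.e.\ point $p \in \mathscr{O}$ the orienting $k$-vector lies in $\CG_p(\vph_\af)$. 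Applying \Cref{thm1}(1)--(2) to $T$ and to $T'$, I pick a boundary point $p \in \mathscr{O} \cap \Gm$ and radius $r$ with $B_r^n(p) \subset \mathscr{O}$ such that $\Gm_r := B_r^n(p) \cap \Gm$ consists of one-sided regular boundary points for both; the local sheets $\Sm^+$, $\Xi^+$ are $C^{3,\af}$, the maps $m \mapsto \iip{(\vph_\af)_m}{\xi_T(m)}$ and the analogue for $T'$ are continuous on $\Sm^+$, $\Xi^+$ by continuity of $\vph_\af$ and $C^2$-ness of the orientations, so $T_q\Sm^+, T_q\Xi^+ \in \CG(\vph_\af)$ for all $q \in \Gm_r$. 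Since both contain $T_q\Gm_r \in G(k-1,\BR^n)$, \Cref{IPL} forces $T_q\Sm^+ = T_q\Xi^+$ on $\Gm_r$, and \Cref{UCP1} then yields $T = T'$.

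The main obstacle I expect is purely technical rather than conceptual: verifying that all the objects in Federer's flat-chain formalism match the current-theoretic notions used in Sections 2--3 --- specifically, that the boundary rectifiability and structure theorems, the representation \eqref{cano}, and the hypothesis ``area-minimizing in $U$'' in the sense of \Cref{areamin} are genuinely inherited once we know $T, T' \in \CI_{k,c}(\BR^n)$, and that the flat-chain Poincar\'e lemma (``closed $=$ exact'') furnishes a chain $S$ whose action against $\af$ makes sense (i.e.\ $\af$ being a flat cochain pairs with flat chains, and the weak closedness $\pl$-dually encodes $d\vph_\af = 0$ in the distributional sense on $\mathscr{O}$ and, more importantly, globally as a cochain identity). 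One must be careful that $T'$, a priori only a flat chain, has \emph{locally finite mass} so that \Cref{calibrated} applies; this is where the competitor comparison with $T$ (finite mass) is used. A secondary subtlety is that \Cref{thm1} and \Cref{UCP1} are stated for $\CI_{k,c}(\BR^n)$ with connected $W$; choosing $W$ to be a bounded connected open set containing $\spt T \cup \spt T'$ (possible since both supports are compact and we may enlarge $W$ to be connected) handles this, exactly as in \Cref{final}. Once these identifications are in place, the argument is a verbatim repetition of the proof of \Cref{main} with $\vph$ replaced by $\vph_\af$ and all local analysis confined to $\mathscr{O}$.
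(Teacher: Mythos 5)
Your proposal is correct and takes essentially the same route as the paper: reduce to $\CI_{k,c}(\BR^n)$ via Federer's identification of finite-mass integral flat chains with compactly supported integral currents, transfer \Cref{areaminimpliescal} using the flat-chain homology isomorphism and the weak closedness of $\af$, and then re-run the proof of \Cref{main} localized to $\mathscr{O}$ where $\af$ is represented by the continuous form $\vph_\af$. The technical points you flag (finite mass of $T'$ by comparison with $T$, choosing a bounded connected $W$, and continuity of $m\mapsto\iip{(\vph_\af)_m}{\xi_T(m)}$ up to $\Gm_r$) are exactly the ones the paper elides in its brief sketch, and you resolve them correctly.
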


The continuity assumption on the calibrating flat cochain in \Cref{flatchain} is essential. To illustrate this, we consider two examples. The first is the ``four corners example" described in the second paragraph of the introduction. 

\begin{ex} \label{corner}
    Let $A = (1, 1)$, $B = (1, -1)$, $C = (-1, -1)$, and $D = (-1, 1)$ be points in $\BR^2$. Assign the negative orientation to $A$ and $C$, and the positive orientation to $B$ and $D$. Then 
    \begin{equation}
    [[(A, B)]] + [[(C, D)]] \quad  \text{and} \quad [[(A, D)]] + [[(C, B)]] \label{abcd}
    \end{equation}
    are two area-minimizing integral flat $1$-chains with boundary $- [[A]] + [[B]] - [[C]] + [[D]]$. Define \[\af := \left\{ \begin{aligned}
    - dy& \quad \text{in } \Int(\triangle ABO), \\
    dx& \quad \text{in } \Int(\triangle BCO),  \\
    dy& \quad \text{in } \Int(\triangle CDO), \\
    - dx& \quad \text{in } \Int(\triangle DAO), \end{aligned}\right.\] where $O = (0, 0)$ is the origin. That $\af$ has comass one can be readily verified. Although $\alpha$ is discontinuous along the diagonals of the square determined by $A,B,C,D$, it is closed in the sense of flat cochains due to its symmetry---for instance, in the first quadrant the diagonal bisects the angle between $-dx$ and $-dy$. Hence, $\af$ is a calibrating flat $1$-cochain. Moreover, both flat $1$-chains in \eqref{abcd} are calibrated by $\af$, demonstrating non-uniqueness when $\alpha$ is not continuous at any boundary point.
\end{ex}

\Cref{baseball} below further demonstrates how symmetric boundaries can lead to non-uniqueness, and can be verified experimentally using soap film wire frame experiments. 
\begin{ex}[A Baseball Seam]\label{baseball}
    Let $\Gm$ be the curve in $\BS^2$ smoothly tracing the seam of a baseball oriented positively. Then there are two oriented area-minimizing surfaces $\Sm$ and $\Xi$ in $\BR^3$ with boundary $\Gm$ which are diffeomorphic by rigid motions. The surfaces $\Sm$ and $\Xi$ can be represented as inward deformations to $\BS^2$ of the two connected faces of $\BS^2$ bounded by $\Gm$. By \Cref{flatchain}, $[[\Gm]]$ is not the boundary of a compactly supported ``continuously calibrated" integral flat chain $T$. This can also be proved directly. Indeed, if $[[\Sm]]$ is calibrated by a continuous $\alpha$ (as a $2$-form), then so is $[[\Xi]]$. However, the approximate tangent planes to $[[\Sm]]$ and $[[\Xi]]$ along $\Gm$ meet at a non-zero angle along the entirety of $\Gm$, violating \Cref{IPL}.
\end{ex}

When $k = n-1$ (i.e. in codimension one), Federer \cite[Section 4.12--4.13]{Fed74} showed that every area-minimizing real flat chain is calibrated by some flat cochain. This result is quite strong; however, the construction is non-constructive as it relies on functional analytic tools like the Hahn--Banach theorem. In the same paper \cite[Section 5.10]{Fed74}, he also proved that every area-minimizing integral flat chain remains area-minimizing, even when compared against real flat chains. Combining these two results, it follows that every codimension one area-minimizing integral flat chain is calibrated by some flat cochain. Applying this and \Cref{flatchain} in tandem, we obtain uniqueness in the Plateau problem for hypersurfaces.

\begin{cor}[Uniqueness in Codimension One] \label{plateau}
     Suppose $\Gm \subset \BR^n$ satisfies the hypotheses of \Cref{A1}. Let $T$ be a globally area-minimizing integral flat $(n-1)$-chain of finite mass with $\pl T = [[\Gm]]$, and let $\alpha$ be a calibrating flat $(n-1)$-cochain for $T$. If there is an open set $\mathscr{O} \subset \BR^n$ with $\mathscr{O} \cap \Gm \neq \emptyset$ such that $\alpha \lvert_{\mathscr{O}}$ can be identified with an $(n-1)$-form $\vph_\alpha \in C^0(\mathscr{O}; \Lambda^{n-1}(\BR^n))$, then $T$ is the unique globally area-minimizing integral flat $(n-1)$-chain with boundary $[[\Gm]]$.
\end{cor}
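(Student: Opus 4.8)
The plan is to obtain this as an immediate application of \Cref{flatchain} with $k = n-1$, supplemented by Federer's results in \cite{Fed74} to explain why the hypotheses are naturally satisfied. First I would check that the hypotheses of \Cref{flatchain} are exactly those assumed in the corollary: $\Gm$ satisfies \Cref{A1}; $\af$ is a calibrating flat $(n-1)$-cochain; there is an open set $\mathscr{O}$ with $\mathscr{O} \cap \Gm \neq \emptyset$ on which $\af$ is identified with a continuous $(n-1)$-form $\vph_\af$; and $T$ is an integral flat $(n-1)$-chain of finite mass with $\pl T = [[\Gm]]$ that is calibrated by $\af$ in $\BR^n$. Note that the hypothesis that $T$ be globally area-minimizing is redundant here, since the flat-chain analogue of \Cref{calibratedimpliesmin} already shows that any flat chain calibrated by $\af$ minimizes mass among all flat chains with the same boundary. \Cref{flatchain} then gives directly that every globally area-minimizing integral flat $(n-1)$-chain $T^\prime$ with $\pl T^\prime = [[\Gm]]$ equals $T$, which is precisely the assertion.

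The only substantive point beyond invoking \Cref{flatchain} is to observe that the corollary is not vacuous: in codimension one a calibrating cochain for $T$ always exists. Here I would cite Federer: by \cite[Section 5.10]{Fed74} a codimension one area-minimizing integral flat chain remains area-minimizing when tested against real flat chains, and by \cite[Section 4.12--4.13]{Fed74} every area-minimizing real flat chain is calibrated by some flat $(n-1)$-cochain. Hence $T$ is automatically calibrated by \emph{some} $\af$; the genuine extra assumption imposed in the statement is therefore only the \emph{local regularity} of $\af$ near a point of $\Gm$ (namely, that some calibrating cochain agrees with a continuous form on a neighborhood of a boundary point), not the existence of a calibration at all.

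I do not anticipate a serious obstacle, since the statement is the $k = n-1$ instance of \Cref{flatchain} dressed with Federer's existence theorem. The point most worth stressing in the writeup is the exact role of the continuity of $\af$ near $\Gm$: it is what lets the first cousin principle (\Cref{FirstCousin}) and the intersecting planes lemma (\Cref{IPL}) force the oriented approximate tangent planes of two competing minimizers to coincide along a relatively open portion of $\Gm$ consisting of one-sided boundary regular points (recall that a finite-mass integral flat chain with finite boundary mass is an integral current with compact support, so the boundary regularity theory of \cite{CDHM} applies), after which unique continuation from Cauchy data for area-minimizing currents (\Cref{UCP1}) yields equality. \Cref{corner} and \Cref{baseball} show that dropping continuity at every boundary point destroys uniqueness, so this hypothesis cannot be weakened within the present method.
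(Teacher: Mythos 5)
Your proposal is correct and follows exactly the route the paper intends: \Cref{plateau} is presented as the direct combination of \Cref{flatchain} (with $k = n-1$) and Federer's two results from \cite{Fed74} cited in the preceding paragraph, which guarantee that a calibrating flat $(n-1)$-cochain for $T$ exists, so the only genuine extra hypothesis is continuity of $\alpha$ near a boundary point. Your supplementary observations (the redundancy of the area-minimizing hypothesis via the flat-chain analogue of \Cref{calibratedimpliesmin}, and the conversion of a finite-mass integral flat chain with boundary $[[\Gm]]$ into a compactly supported integral current so that \cite{CDHM} applies) are accurate and consistent with the paper's discussion.
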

\begin{ex}[Simons Cone] \label{SC}
    The Simons cone $\mathcal{S} \subset \BR^8$ is the hypercone with \emph{link}\footnote{See the paragraph preceding \Cref{cone}.} \[\BS^3\Big(\frac{\sqrt{2}}{2}\Big) \times \BS^3\Big(\frac{\sqrt{2}}{2}\Big) \Leftrightarrow \mathcal{S} = \{(x, y) \in \BR^4 \times \BR^4 : |x| = |y|\}.\] Its local stability was proved by Simons in \cite[Section 6]{Sim68}. Later, Bombieri, De Giorgi, and Giusti proved that it is globally area-minimizing by constructing a calibration that is singular only at the origin (see \cite[Lemma 1 \& Section 3]{BDG69}), so \Cref{plateau} applies. In fact, all homogeneous area-minimizing hypercones admit such calibrations by \cite[Theorem 1.9]{Zh16}.
\end{ex}

Observe that the $\Gm$ in \Cref{baseball} is a smooth simple closed curve so that $[[\Gm]]$, $[[\Sm]]$, and $[[\Xi]]$ satisfy the assumptions in \Cref{plateau}. The only issue is that, in every neighborhood of $\Gm$ in $\BR^n$, $[[\Sm]]$ and $[[\Xi]]$ cannot be calibrated by a continuous differential form in the sense of flat chains. In both \Cref{corner} and \Cref{baseball}, this is due to the symmetry of the boundary, suggesting that the regularity of the calibrating form along the boundary is related to the boundary geometry. It is therefore of interest to determine what boundaries can be spanned by continuously calibrated area-minimizing integral flat chains. This will likely require methods for constructing singular calibrations. See \cite{BDG69, Zh16, Zh17} for examples.

%%%%%%%%%%%%%%%%%%%%%%%%%%%%%%%%
\section{\textbf{General ambient manifolds}}
One can define calibrations on a general Riemannian manifold. A Riemannian manifold together with a smooth calibration is called a \emph{calibrated manifold}. There are many examples of calibrated manifolds (see e.g. \cite{HL82}). However, in contrast to \Cref{calibratedimpliesmin}, a calibrated current in a calibrated manifold can only be assumed to be \emph{homologically} area-minimizing (see \cite[Chapter II, Theorem 4.2]{HL82}). Nonetheless, all the results in Section 3.2--4 and Appendix A carry over to a general ambient manifold $M$ that is complete without boundary, analytic, and satisfies $H_k(M; \BR) = 0$. 

The last two conditions are essential. For analyticity, our arguments heavily rely on the fact that $C^1$ minimal submanifolds in an analytic ambient manifold are actually analytic by the bootstrapping argument (see also \Cref{analytic}), so that \Cref{analyticconti} holds. For the homological condition, a general ambient manifold $M$ may contain a closed $k$-dimensional submanifold $\Sm$ calibrated by a $k$-form $\vph$. For example, $\Sm = \BS^n$ is a closed special Lagrangian submanifold in $T^*\BS^n$ with the Stenzel metric (see e.g. \cite{Ste93} or \cite[Section 5.1]{TW18}).

By chopping $\Sm$ into two submanifolds $\Sm^+$ and $\Sm^-$, we note that both are calibrated by $\vph$ and share a common boundary having opposite orientations. Therefore, $[[\Sm^+]]$ and $-[[\Sm^-]]$ share the same oriented boundary and are calibrated by $\vph$ and $- \vph$, respectively. If the volume of $\Sm$ is divided unequally, we get a counterexample to \Cref{calibratedimpliesmin}. On the other hand, if the volume is divided equally, we obtain a counterexample to \Cref{areaminimpliescal} and to uniqueness in the oriented Plateau problem. To exclude these examples, we must assume $H_k(M; \BR) = 0$. With this assumption, statements similar to \Cref{calibratedimpliesmin} and \Cref{areaminimpliescal} hold. The reason is that we rule out the existence of closed calibrated $k$-dimensional submanifolds in $M$ by the universal coefficient theorem and de Rham theorem.

%%%%%%%%%%%%%%%%%%%%%%%%%%%%%%%%
\appendix
\section{\textbf{Uniqueness of restrictions of global area-minimizers}}

We show that if $T \in \CI_{k, c}(\BR^n)$ is a global area-minimizer arising as the restriction of another global area-minimizer $T^\prime \in \CI_{k, c}(\BR^n)$, then $T$ is the unique global area-minimizer among all compactly supported integral currents with the same boundary as $T$. The authors believe that it must be known to experts in the field, particularly when stated as \Cref{cone}. 

\begin{thm} \label{main_2}
    Let $T \in \CI_{k, c}(\BR^n)$ be globally area-minimizing. Suppose that there exists a globally area-minimizing integral $k$-current $T^\prime \in \CI_{k, c}(\BR^n)$ with compact support such that $T^\prime \llcorner \spt T  = T$ and $\spt (\pl T) \subset \spt T^\prime \setminus \spt (\pl T^\prime)$. Then $T$ is the unique global area-minimizer among all compactly supported integral $k$-currents with the same boundary as $T$.
\end{thm}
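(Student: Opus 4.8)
The plan is to show that any globally area-minimizing $S \in \CI_{k,c}(\BR^n)$ with $\pl S = \pl T$ must coincide with $T$, by a two-step scheme: a mass-comparison that produces a second global area-minimizer sharing the boundary of $T'$, and then an application of interior unique continuation (\Cref{analyticconti}) seeded near $\spt(\pl T)$. If $\pl T = 0$, then $T$ is a compactly supported area-minimizing cycle, so comparison with the zero current forces $T = 0$ and there is nothing to prove; so assume $\pl T \neq 0$ and fix a competitor $S$ as above.

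First I set $R := T' - T + S$. Since $T = T' \llcorner \spt T$, the mass measures split as $\|T'\| = \|T\| + \|T'-T\|$, where $T'-T = T'\llcorner(\BR^n\setminus\spt T)$; hence $\BM(T') = \BM(T) + \BM(T'-T)$. Moreover $S$ and $T$ are each globally area-minimizing with the same compactly supported boundary, so $\BM(S) = \BM(T)$. As $R$ is compactly supported with $\pl R = \pl T'$, comparison with $T'$ gives
\[
\BM(T') \le \BM(R) \le \BM(T'-T) + \BM(S) = \bigl(\BM(T') - \BM(T)\bigr) + \BM(T) = \BM(T').
\]
Therefore $R$ is globally area-minimizing with $\pl R = \pl T'$, and equality throughout forces $\|R\| = \|T'-T\| + \|S\|$; in particular there is no cancellation between $S$ and $T'-T$. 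It remains to upgrade this to $R = T'$, equivalently $S = T$.

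Next comes the local analysis near $\spt(\pl T)$. Because $\spt(\pl T) \subset \spt T' \setminus \spt(\pl T')$, while $\sing_i(T')$ has Hausdorff dimension at most $k-2$ and $\CH^{k-1}(\spt(\pl T)) > 0$, I can pick $p \in \reg_i(T') \cap \spt(\pl T)$ and $r > 0$ with $\overline{B_r^n(p)} \cap \spt(\pl T') = \emptyset$ and $T' \llcorner B_r^n(p) = Q[[\Sm]]$ for an analytic connected minimal submanifold $\Sm$ without boundary in $B_r^n(p)$ and some $Q \in \BN$. Writing $E := \Sm \cap \spt T \cap B_r^n(p)$, one gets $T \llcorner B_r^n(p) = Q[[E]]$, $(T'-T)\llcorner B_r^n(p) = Q[[\Sm\setminus E]]$, and $0 < \CH^k(E) < \CH^k(\Sm \cap B_r^n(p))$ since $(\pl T)\llcorner B_r^n(p) \neq 0$. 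In $B_r^n(p)$ both $R$ and $T'$ are area-minimizing cycles; since $R$ contains $Q[[\Sm\setminus E]]$ without cancellation, $\spt R \supseteq \Sm\setminus E$, and the interior regularity theorem ($\sing_i(R)$ closed of dimension $\le k-2$) forces $\spt R$ to coincide with $\Sm$ near every point of the relatively open set $(\Sm\setminus E)\setminus\sing_i(R)$; the Constancy Theorem then pins down the local multiplicities of $S$ there, and comparing $\BM_{B_r^n(p)}(S\llcorner B_r^n(p))$ with $\BM_{B_r^n(p)}(T\llcorner B_r^n(p))$ (using that $p \in \spt(\pl S) = \spt(\pl T)$) rules out spurious extra sheets and yields $S\llcorner B_{r'}^n(p) = T\llcorner B_{r'}^n(p)$ for some $0 < r' < r$, i.e. $R = T'$ on a nonempty open subset of $\reg_i(T')$. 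Then, since $R - T' = S - T$ vanishes on an open set meeting the analytic sets $\reg_i(R)$ and $\reg_i(T')$ and $\pl R = \pl T'$, \Cref{analyticconti} propagates the coincidence to the component of $\reg_i(T')$ through $p$; iterating along $\reg_i(T)$ — and using that every connected component of $\reg_i(T)$ must accumulate at $\spt(\pl T)$, since otherwise its closure would carry a nonzero compactly supported area-minimizing cycle — gives $S\llcorner\reg_i(T) = T\llcorner\reg_i(T)$, whence $S = T$ because $\BM(S) = \BM(T) = \CH^k(\reg_i(T))$.

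I expect the main obstacle to be the local step of the third paragraph. Since $\pl T$ is only assumed to be an integral current and not a smooth submanifold, the set $E = \Sm \cap \spt T$ and its boundary inside $\Sm$ can be quite irregular, so the delicate point is to show — via the codimension bound on $\sing_i(R)$, the Constancy Theorem, and the mass identities above — that $S$ genuinely coincides with $T$ on an \emph{open} piece of $\spt T$ rather than merely $\CH^k$-almost everywhere, so that \Cref{analyticconti} can be seeded; combined with the bookkeeping needed to run the unique continuation across every component of $\reg_i(T)$, this is where the care is required. (The case in which $p$ is a one-sided boundary point, $Q = 1$ — which already covers the applications to regular area-minimizing cones and to graphical hypersurfaces over convex domains — is the cleanest.)
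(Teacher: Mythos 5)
Your overall strategy coincides with the paper's: the mass comparison producing the auxiliary minimizer $R := T' - T + S$ with $\pl R = \pl T'$, the no-cancellation statement, and the closing propagation via \Cref{analyticconti} together with the observation that each component of $\reg_i(T)$ must accumulate at $\spt(\pl T)$ are all there. Your derivation of $\|R\| = \|T'-T\| + \|S\|$ from equality in the triangle inequality is clean (the paper instead argues by contradiction that $\spt T'$ and $\spt R$ agree $\CH^k$-a.e. on $\spt T' \setminus \spt T$, which follows from your identity since $\|R\| \geq \|T'-T\|$). The explicit dispatch of the case $\pl T = 0$ is a harmless addition.

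The gap is the ``rules out spurious extra sheets'' step in your third paragraph. After the Constancy Theorem you are left with $S$ having some constant multiplicity $Q'-Q \geq 0$ on $\Sm$ near $q \in (\Sm\setminus E)\setminus\sing_i(R)$, and you need $Q' = Q$; you propose to deduce this by comparing $\BM_{B_r^n(p)}(S\llcorner B_r^n(p))$ with $\BM_{B_r^n(p)}(T\llcorner B_r^n(p))$. This does not go through: $\BM(S) = \BM(T)$ is a \emph{global} identity between two minimizers sharing a boundary and carries no information about how mass is distributed in a fixed small ball; distinct globally area-minimizing competitors for the same boundary generally have unequal mass in $B_r^n(p)$. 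So the extra sheets of $S$ over $\Sm\setminus E$ are not ruled out by this local mass accounting, and you rightly flagged this as the main obstacle — it is a real one.

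The paper closes the step in the opposite order, and you should do the same. Once one knows $\spt R$ agrees with $\spt T'$ $\CH^k$-a.e. on $\spt T' \setminus \spt T$, apply \Cref{analyticconti} directly to the pair $T'$ and $R$ on the component $M_x$ of $\reg_i(T')$ through a point $x \in \reg_i(T') \cap \spt(\pl T)$: the hypotheses hold because $\pl R = \pl T'$, both regular parts are analytic, and the a.e. coincidence on $M_x \cap (\spt T'\setminus\spt T)$ (nonempty since $x \in \spt(\pl T)$) upgrades by smoothness to an infinite-order contact point in $M_x \cap \reg_i(R)$. This yields $M_x = \reg_i(R\llcorner M_x)$ as oriented manifolds, and only then is the multiplicity of $R$ on $M_x$ identified with that of $T'$, via \Cref{integerdensity} and $\pl T = \pl S$, giving $R = T'$ on an open neighborhood of $\reg_i(T') \cap \spt(\pl T)$. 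In short: invoke the analytic unique continuation lemma for $T'$ versus $R$ \emph{before} attempting to pin down multiplicities, rather than trying to settle the multiplicity question by local mass comparison, which is unavailable.
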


\begin{proof}
    Suppose that $S \in \CI_{k,c}(\BR^n)$ is another globally area-minimizing integral $k$-current with compact support that satisfies $\pl S = \pl T$. Then the assumptions on $T^\prime$ show that \begin{equation} \label{eq1}
        \BM(T^\prime) \leq \BM(T^\prime - T + S) \leq \BM(T^\prime - T) + \BM(S) \leq \BM(T^\prime - T) + \BM(T) = \BM(T^\prime).
    \end{equation} In other words, $T^\prime - T + S$ is a global area-minimizing integral $k$-current with compact support which has the same boundary as $T^\prime$.

    We next claim that $\spt T^\prime$ and $\spt (T^\prime - T + S)$ agree $\CH^k$-a.e. on $\spt T^\prime \setminus \spt T$. Suppose otherwise. Then there exists a relatively open, non-empty subset $\mathscr{O}$ of $\spt T^\prime \setminus \spt T$ such that $S$ cancels out $T^\prime - T$ on $\mathscr{O}$. That is, $S \llcorner \mathscr{O} = -(T^\prime - T)\llcorner \mathscr{O} = -T^\prime \llcorner \mathscr{O}.$ We then compute the mass \[\begin{aligned}
    \BM(T^\prime - T + S) &= \BM(T^\prime - T + S + S\llcorner\mathscr{O} - S\llcorner\mathscr{O}) \\
    &\leq \BM(T^\prime - T - T^\prime \llcorner \mathscr{O}) + \BM(S - S\llcorner\mathscr{O}) \\
    &< \BM(T' - T) + \BM(S) \\
    &= \BM(T^\prime).\end{aligned}\] This contradicts \eqref{eq1}, and hence the claim follows.

    For any $x \in \reg_i(T^\prime) \cap \spt (\pl T)$, let $M_x$ be the connected component of $\reg_i(T^\prime)$ containing $x$. Then the aforementioned claim together with \Cref{analyticconti} shows that \[M_x = \reg_i((T^\prime - T + S) \llcorner M_x).\] Moreover, by \Cref{integerdensity} and the assumption that $\pl T = \pl S$, $T^\prime$ and $T^\prime - T + S$ have the same densities on $M_x$. Taking the union over all such $M_x$, we conclude that $T^\prime = T^\prime- T + S$ on an open neighborhood of $\reg_i(T^\prime) \cap \spt (\pl T)$ in $\spt T^\prime$. In other words, there exists an open neighborhood $\mathscr{U} \subset \spt T$ of $\reg_i(T^\prime) \cap \spt (\pl T)$ such that \[T \llcorner \mathscr{U} = S \llcorner \mathscr{U}.\]

    Finally, we claim that every connected component of $\reg_i(T)$ and $\reg_i(S)$ must intersect $\mathscr{U}$ non-trivially. If not, we may assume that there is a connected component $N$ of $\reg_i(T)$ such that $N \cap \mathscr{U} = \emptyset$. Note that both $\sing_i(T)$ and $\sing_i(T^\prime) \cap \spt (\pl T)$ have Hausdorff dimension at most $k-2$. Then a cutoff argument---essentially the same as that used in the third paragraph of the proof of \cite[Lemma 4.4]{ELV24}, with $d-5$ and $p < 5$ replaced by $k-2$ and $p < 2$, respectively---shows that $\pl(T \llcorner N) = 0$. This violates the assumption that $T$ is area-minimizing. The same reasoning applies to $S$, and thus the claim follows. With this claim established, the unique continuation argument---by using \Cref{analyticconti} and \Cref{integerdensity}---in the previous paragraph can therefore be extended to the whole $\reg_i(T^\prime) \cap \spt T$, yielding \[T \llcorner (\reg_i(T^\prime) \cap \spt T) = S \llcorner (\reg_i(T^\prime) \cap \spt T).\] As $\sing_i(T^\prime)$ is $\CH^k$-null, the proof is complete.
\end{proof}

\begin{rmk}
    In contrast to \Cref{main}, \Cref{main_2} does not impose any assumptions on the regularity, connectivity, or multiplicity of $\pl T$.
\end{rmk}

Let $C \subset \BR^n$ be a $k$-dimensional cone. Then $C$ is called \emph{regular} if $C \setminus \{p\} = \reg_i(C)$ for some $p \in \BR^n$. For such a $p$, we call $C \cap \pl B^n_1(p)$ its \emph{link}. We have the following corollary for area-minimizing regular cones (e.g. \Cref{LOC} or \Cref{SC}):
\begin{cor}\label{cone}
   Let $C$ be a $k$-dimensional regular cone in $\BR^n$, and suppose that $[[C]]$ is globally area-minimizing in $\BR^n$. Then $[[C]] \llcorner B^n_1(p)$ is the unique global area-minimizer bounded by its link.
\end{cor}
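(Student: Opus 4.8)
The plan is to deduce \Cref{cone} directly from \Cref{main_2} by exhibiting the right ``extension'' of $T := [[C]] \llcorner B_1^n(p)$. First I would set $T^\prime := [[C]]$, which is globally area-minimizing by hypothesis. The two conditions of \Cref{main_2} must then be checked: (i) $T^\prime \llcorner \spt T = T$, and (ii) $\spt(\pl T) \subset \spt T^\prime \setminus \spt(\pl T^\prime)$. For (i), note $\spt T = C \cap \overline{B_1^n(p)}$ (the cone truncated at its link), and since $C$ is a cone with multiplicity one on $\reg_i(C) = C \setminus \{p\}$, restricting $[[C]]$ to this set recovers exactly $T$; I would make this precise using the canonical representation \eqref{cano} and the fact that $\CH^k(\pl B_1^n(p) \cap C) = 0$ so the boundary sphere contributes nothing to the mass measure. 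For (ii), observe $\pl T = [[C \cap \pl B_1^n(p)]]$ (the link with induced orientation, which is the content of why the link is the relevant boundary data), so $\spt(\pl T) = C \cap \pl B_1^n(p)$; this sits inside $C \setminus \{p\} = \reg_i(C) \subset \spt T^\prime \setminus \spt(\pl T^\prime)$, where we use that $T^\prime = [[C]]$ has \emph{no} boundary as an integral current (a cone over a closed submanifold-of-the-sphere is cycle away from the vertex, and globally $\pl [[C]] = 0$ since $C$ is a cone — indeed $[[C]]$ being globally area-minimizing with the stated structure forces $\pl[[C]] = 0$, otherwise its boundary would be a nonzero cone through the origin, impossible for $\CI_{k,c}$).

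Once (i) and (ii) are verified, \Cref{main_2} applies verbatim and yields that $T = [[C]]\llcorner B_1^n(p)$ is the unique global area-minimizer among all compactly supported integral $k$-currents with boundary $\pl T = [[C \cap \pl B_1^n(p)]]$, i.e.\ bounded by its link. That is precisely the assertion of \Cref{cone}.

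The main obstacle I anticipate is the verification of $\pl T^\prime = 0$, equivalently that a globally area-minimizing regular cone $[[C]] \in \CI_{k,c}(\BR^n)$ has no boundary. One must rule out $\pl[[C]] \neq 0$: if $[[C]]$ has compact support and is a genuine cone with vertex $p$, then by the Constancy Theorem on $\reg_i(C) = C \setminus \{p\}$ and the structure of $\pl[[C]]$ as an integral $(k-1)$-current supported in $C$, any boundary would have to be supported at $\{p\}$ or be itself conical — but an integral $(k-1)$-current supported at a point is zero (for $k \geq 2$), and a nonzero conical boundary cannot have compact support. I would handle the low-dimensional edge case $k=1$ separately if needed. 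Alternatively, one can simply add the hypothesis, implicit in calling $[[C]]$ ``globally area-minimizing'' in $\CI_{k,c}$, that $\pl[[C]] = 0$; this is automatic in all the motivating examples (\Cref{LOC}, \Cref{SC}). A secondary routine point is identifying $\pl([[C]] \llcorner B_1^n(p))$ with $[[C \cap \pl B_1^n(p)]]$ up to orientation, which follows from the slicing theory for currents (the slice of $[[C]]$ by the distance function $\dist(\cdot, p)$ at radius $1$), together with the fact that for a.e.\ radius the slice is an integral current — and here every radius works since $C$ is a smooth cone away from the vertex, so the link is a smooth $(k-1)$-submanifold of the sphere.
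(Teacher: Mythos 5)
There is a genuine gap: your choice $T^\prime := [[C]]$ does not satisfy the standing hypothesis of \Cref{main_2}, which requires $T^\prime \in \CI_{k,c}(\BR^n)$, i.e.\ \emph{compactly supported}. A regular $k$-dimensional cone $C$ (with nonempty link) is unbounded, so $[[C]] \in \CI_k(\BR^n) \setminus \CI_{k,c}(\BR^n)$, and \Cref{main_2} cannot be applied with this $T^\prime$. Your later discussion --- ``if $[[C]]$ has compact support and is a genuine cone with vertex $p$'' --- is internally inconsistent for exactly this reason: a compactly supported cone is trivial. The whole paragraph devoted to showing $\pl[[C]] = 0$ is therefore both misframed and unnecessary.

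The fix is small: take $T^\prime := [[C]] \llcorner B^n_R(p)$ for any fixed $R > 1$. This \emph{is} compactly supported, and it is still globally area-minimizing (a restriction to a ball of a globally area-minimizing current with zero boundary is globally area-minimizing, by comparison with $[[C]]$ inside a large window $W \supset \supset \overline{B^n_R(p)}$). The two structural conditions of \Cref{main_2} are then satisfied with no side cases: $T^\prime \llcorner \spt T = T$ since $\CH^k(C \cap \pl B^n_1(p)) = 0$, and $\spt(\pl T) = C \cap \pl B^n_1(p) \subset C \cap B^n_R(p) = \spt T^\prime \setminus \spt(\pl T^\prime)$ precisely because $R > 1$, so one never needs to argue about $\pl[[C]]$ at all. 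Your identification $\pl T = [[C \cap \pl B^n_1(p)]]$ via slicing is fine and survives unchanged. With this substitution your argument becomes correct and matches the intended reading of the corollary.
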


Finally, we note that \Cref{graph_calibrated}, \Cref{UOPP}, and \Cref{main_2} together imply the uniqueness statement in \cite[Theorem 2]{Ha77} when the boundary of the graph is at least $C^{3,\alpha}$.

%%%%%%%%%%%%%%%%%%%%%%%%%%%%%%%%

\bibliographystyle{abbrv}
\bibliography{references}

%%%%%%%%%%%%%%%%%%%%%%%%%%%%%%%%
\end{sloppypar}
\end{document}